\documentclass[reqno]{amsart}

\usepackage[left=26mm,top=30mm,right = 26mm, bottom = 30mm]{geometry}

\usepackage[english]{babel} 
\usepackage[utf8]{inputenc} 
\usepackage[T1]{fontenc}
\usepackage{tikz-cd}
\usepackage{enumitem}
\usepackage{amsthm}
\usepackage[backgroundcolor=blue!5!white]{todonotes}
\usepackage{mathtools} 
\usepackage[pagebackref]{hyperref}
\usepackage{mathrsfs}
\usepackage{quiver}
\usepackage{bussproofs}
\usepackage{mleftright}
\usepackage{faktor}
\usepackage{xfrac}
\usepackage{extarrows}
\usepackage{tikz}
\usepackage{stmaryrd}
\usepackage{graphicx}
\usepackage[dvipsnames]{xcolor}
\usepackage{orcidlink}

\usepackage[capitalize]{cleveref} 

\theoremstyle{plain}

\newtheorem{theorem}{Theorem}[section]
\newtheorem{proposition}[theorem]{Proposition}
\newtheorem{lemma}[theorem]{Lemma}

\theoremstyle{plain}
\newtheorem*{claim*}{Claim}

\theoremstyle{definition}

\newtheorem{definition}[theorem]{Definition}
\newtheorem*{definition*}{Definition}
\newtheorem{notation}[theorem]{Notation}
\newtheorem{example}[theorem]{Example}

\newtheorem{remark}[theorem]{Remark}
\newtheorem*{remark*}{Remark}

\theoremstyle{remark}

\AddToHook{env/proposition/begin}{\crefalias{theorem}{proposition}}
\AddToHook{env/lemma/begin}{\crefalias{theorem}{lemma}}
\AddToHook{env/corollary/begin}{\crefalias{theorem}{corollary}}
\AddToHook{env/conjecture/begin}{\crefalias{theorem}{conjecture}}
\AddToHook{env/claim/begin}{\crefalias{theorem}{claim}}

\AddToHook{env/definition/begin}{\crefalias{theorem}{definition}}
\AddToHook{env/notation/begin}{\crefalias{theorem}{notation}}
\AddToHook{env/example/begin}{\crefalias{theorem}{example}}
\AddToHook{env/examples/begin}{\crefalias{theorem}{examples}}

\AddToHook{env/remark/begin}{\crefalias{theorem}{remark}}

\newcommand{\cat}[1]{\mathsf{#1}}

\newcommand{\Set}{\cat{Set}}
\newcommand{\BA}{\cat{BA}}
\newcommand{\Pos}{\cat{Pos}}
\newcommand{\Ctx}{\cat{Ctx}}
\newcommand{\D}{\cat{D}}
\newcommand{\C}{\mathsf{C}}
\newcommand{\FinSet}{\cat{FinSet}}

\newcommand{\LT}{\mathsf{LT}}

\newcommand{\N}{\mathbb{N}}

\newcommand{\T}{\mathcal{T}}

\newcommand{\tmn}{\mathbf{1}}

\newcommand{\F}{\mathbb{F}}

\renewcommand{\P}{\mathbf{P}}
\newcommand{\R}{\mathbf{R}}

\newcommand{\I}{\mathbf{I}}

\newcommand{\ex}[2]{(\exists{#1})_{#2}}
\newcommand{\exn}[3]{(\exists^{#1}{#2})_{#3}}
\newcommand{\fan}[3]{(\forall^{#1}{#2})_{#3}}
\newcommand{\eq}[2]{(\textup{\AE}{#1})_{#2}}

\newcommand{\ple}[1]{\langle#1\rangle}

\newcommand{\pws}{\mathscr{P}}

\DeclareMathOperator{\id}{id}
\DeclareMathOperator{\pr}{pr}
\newcommand{\op}{^{\mathrm{op}}}

\newcommand{\leqexpl}[1]{%
    \underset{
        \substack{
            \big\uparrow\\\mathrlap{
                \begin{footnotesize}
                \hspace{-1em}\begin{tabular}{l}
                     #1
                \end{tabular}
                \end{footnotesize}
            }
        }
    }{%
        \leq
    }
}

\newcommand{\inlinecorner}{%
  \vcenter{\hbox{\scalebox{1}{%
    \begin{tikzcd}[
        ampersand replacement=\&,
        cramped,
        row sep=0.7em,
        column sep=0.7em,
        cells={nodes={inner sep=0.7pt}}
    ]
        \& \bullet \arrow[d] \\
        \bullet \arrow[r] \& \bullet
    \end{tikzcd}%
  }}}%
}

\usetikzlibrary{positioning,fit,calc,decorations.pathreplacing,arrows.meta,shapes.geometric}

\makeatletter 
\def\l@subsection{\@tocline{2}{0pt}{2pc}{6pc}{}} 
\makeatother

\title{On the Beck--Chevalley condition}

\hypersetup{
    pdftitle={On the Beck--Chevalley condition},
    pdfauthor={Marco Abbadini, Francesca Guffanti},
    pdfmenubar=false,
    pdffitwindow=true,
    pdfstartview=FitH,
    colorlinks=true,
    linkcolor=teal,
    citecolor=magenta,
    urlcolor=cyan
}

\keywords{Hyperdoctrines, Beck--Chevalley condition}

\subjclass[2020]{Primary: 03G30. Secondary: 03G05, 03B10, 03G15, 06E25}
\author[Marco Abbadini]{Marco Abbadini\textsuperscript{ \orcidlink{0000-0003-1292-6006}}}
\address[Marco Abbadini]{Université catholique de Louvain, Research Institute in Mathematics and Physics, 
Chem.\ du Cyclotron 2, 1348 Louvain-la-Neuve, Belgium}
\email{marco.abbadini@uclouvain.be}
\urladdr{\url{https://marcoabbadini-uni.github.io}}

\author[Francesca Guffanti]{Francesca Guffanti\textsuperscript{ \orcidlink{0009-0005-8792-0655}}}
\address[Francesca Guffanti]{Université Savoie Mont Blanc,
LAMA, Campus Scientifique,
73376 Le Bourget-du-Lac Cedex, France}
\email{francesca.guffanti@univ-smb.fr}
\urladdr{\url{https://sites.google.com/view/francesca-guffanti}}

\begin{document}

\begin{abstract}
    Boolean hyperdoctrines provide an algebraic semantics for classical first-order logic with equality.
    In the definition of a Boolean hyperdoctrine, the Beck--Chevalley condition captures the commutativity of substitutions with quantifiers and with equality.
    Often, a generalization of these conditions is considered, which requires the commutativity of an appropriate square for \emph{every} pullback square in the base category. A Boolean hyperdoctrine satisfying this condition is called \emph{full}.
    
    Our contribution is twofold.
    On the negative side, we exhibit a non-full Boolean hyperdoctrine.
    On the positive side, we show that every Boolean hyperdoctrine $\FinSet \to \BA$ over $\FinSet\op$ is full.
\end{abstract}

\maketitle

\tableofcontents

\section{Introduction}
Hyperdoctrines were introduced as a categorical framework for logic by Lawvere in his seminal works \cite{Lawvere69,Lawvere70}. A \emph{Boolean hyperdoctrine} over a category $\C$ with finite products is a functor $\P\colon\C\op\to\BA$ (where $\BA$ denotes the category of Boolean algebras) satisfying suitable conditions.
A motivating example comes from classical first-order logic: an object of $\C$ is a context (i.e.\ a finite set of variables), a morphism $X\to Y$ in $\C$ is an assignment of a term in context $X$ to each variable in $Y$, a fiber $\P(X)$ is the
Lindenbaum--Tarski algebra of formulas in context $X$, and the reindexing along a morphism is the corresponding substitution function. 
Quantification and equality are encoded by adjoints to suitable reindexing maps. The Beck--Chevalley condition expresses the compatibility of these adjunctions with substitution.

In the definition of a Boolean hyperdoctrine that we follow, the Beck--Chevalley condition is required only for certain pullback squares: those arising from projections (related to quantifiers) and from diagonals (related to equality). If the base category happens to have other pullbacks, one may ask whether the same compatibility should automatically hold for them as well.
In this paper, we call a Boolean hyperdoctrine \emph{full} when every reindexing map has a left adjoint and the Beck--Chevalley condition holds for every pullback square in the base category (\cref{def:full}).
For Boolean hyperdoctrines, the first part of this stronger requirement is not extra structure: it is well known that equality and existential quantification already imply that reindexing along any morphism has a left adjoint \cite[Rem.~2.13]{MaiettiRosolini2013a}. Thus, the real issue is whether the Beck--Chevalley condition imposed in the definition of a Boolean hyperdoctrine forces the Beck--Chevalley condition for all pullbacks.
Statements in the literature suggest that this should not be expected in general \cite[Rem.~4.6]{Pitts1999}, \cite[Example~4.3.7]{Jacobs1999}, and a counterexample is known in the setting of primary doctrines \cite[Rem.~6.4]{MaiettiTrotta2023}. However, we have not found in the literature a counterexample in the Boolean setting.

The first main result of the paper provides such a counterexample. We give a general procedure: if $\C$ is a category with finite products, $F\colon \C\to\Set$ preserves finite products and $\pws\colon\Set\op\to\BA$ is the powerset hyperdoctrine, then the composite
\[
    \C\op \xrightarrow{F\op} \Set\op \xrightarrow{\pws} \BA
\]
is a Boolean hyperdoctrine (\cref{p:recipe}). Moreover, this hyperdoctrine is full precisely when $F$ preserves the pullbacks that exist in $\C$ (\cref{p:full-powerset-preserves-pullbacks}).
As a consequence, any finite-product-preserving functor to $\Set$ that fails to preserve some pullback yields a non-full Boolean hyperdoctrine.
A particularly simple example is obtained from the non-emptiness functor $N\colon\Set\to\Set$, which sends the empty set to the empty set and every non-empty set to a singleton. This functor preserves finite products but not pullbacks.
The corresponding composite
\[
    \Set\op \xrightarrow{N\op} \Set\op \xrightarrow{\pws} \BA,
\]
which we call the ``indiscrete'' Boolean hyperdoctrine, is a non-full Boolean hyperdoctrine (\cref{t:BC-pi0-counterexample}).
Concretely, the indiscrete hyperdoctrine is the functor $\Set\op \to \BA$ that assigns the one-element Boolean algebra to the empty set and the two-element Boolean algebra to every non-empty set.

In contrast, this phenomenon cannot occur for the base category $\FinSet\op$. More precisely, every Boolean hyperdoctrine $\FinSet \to \BA$ over $\FinSet\op$ is full  (\cref{t:all-full-finset}). 
The proof is combinatorial: every pushout square in $\FinSet$ (i.e., pullback in $\FinSet\op$) can be decomposed into a pasting of simpler pushout squares, and we show that each of these satisfies the Beck--Chevalley condition.
Pasting then yields the Beck--Chevalley condition for arbitrary pushout squares in $\FinSet$.
This tells us that every
Lindenbaum--Tarski hyperdoctrine of a one-sorted first-order theory in a purely relational language with equality is full (\cref{r:syn-full}).

The paper is organized as follows. In  \cref{s:bool-hyp}, we recall the definition of a Boolean hyperdoctrine and give an equivalent compact formulation in which existential quantification and equality are treated uniformly.
We then introduce \emph{full} Boolean hyperdoctrines and explain the relation between the ordinary and full Beck--Chevalley conditions.
In \cref{s:non-full}, we develop the construction based on finite-product-preserving functors $F\colon\C\to\Set$ and use the non-emptiness functor to obtain a non-full Boolean hyperdoctrine.
Finally, in \cref{s:bool-doct-finset} we prove that every Boolean hyperdoctrine over $\FinSet\op$ is full.
Appendix~\ref{appendix} collects equivalent formulations of equality for a Boolean-valued functor, which are used in \cref{s:bool-doct-finset}.

\section{Preliminaries: Boolean hyperdoctrines}\label{s:bool-hyp}

\begin{notation}[Standing notation]\hfill
    \begin{enumerate}
        \item 
        $\N$ denotes the set of natural numbers, including $0$.
        
        \item
        $\BA$ denotes the category of Boolean algebras and Boolean homomorphisms.
        \item $\Pos$ denotes the category of partially ordered sets and order-preserving functions.
        
        \item
        $\tmn_\C$ (or simply $\tmn$) denotes the terminal object (when it exists) of a category $\C$.
    \end{enumerate}
\end{notation}

\subsection{Boolean hyperdoctrines}

The notion of a hyperdoctrine has its roots in the work of Lawvere \cite{Lawvere69,Lawvere70}; the following definition of a \emph{Boolean hyperdoctrine} captures classical first-order logic with equality\footnote{The doctrinal version of ``having equality'' that we use is in the arXiv preprint \cite[Def.~2.5]{MaiettiArxiv}. Note that, in the published version \cite[Def.~2.5]{MaiettiRosolini2013a}, the text appears unintentionally truncated and missing the Beck--Chevalley condition.}.

\begin{definition}[Boolean hyperdoctrine]\label{d:bool_hyp}
    Given a category $\C$ with finite products, a \emph{Boolean hyperdoctrine over $\C$} is a functor $\P \colon \C\op \to  \BA$ with the following properties.
    \begin{enumerate}
        \item \label{i:h1} {(Existential)} 
        For all $X, Y \in \C$, letting $\pr^{X\times Y}_X \colon X \times Y \to X$ denote the projection onto the first coordinate, the function
        \[
        \P(\pr^{X\times Y}_X) \colon \P(X) \longrightarrow \P(X \times Y)
        \]
        has a left adjoint, denoted $\ex{Y}{X}$ (which is not required to be a Boolean homomorphism).
        
        \item \label{i:h2}
        (Beck--Chevalley for existential) For any morphism $f\colon X'\to X$ in $\C$ and every $Y \in \C$, the following square in $\Pos$ commutes. 
        \[
            \begin{tikzcd}
            {X} & {\P(X\times Y)} & {\P(X)} \\
            X' & {\P(X'\times Y)} & {\P(X')}
            \arrow["{\P(f\times\id_{Y})}", from=1-2, to=2-2, swap]
            \arrow["{\P(f)}", from=1-3, to=2-3]
            \arrow["{\ex{Y}{X'}}"', from=2-2, to=2-3]
            \arrow["{\ex{Y}{X}}", from=1-2, to=1-3]
            \arrow["f", from=2-1, to=1-1]
            \end{tikzcd}
        \]
        
        \item\label{i:h3} (Equality) For all $X,Y\in\C$, letting $\Delta_Y \colon Y \to Y \times Y$ denote the diagonal morphism $\ple{\id_Y,\id_Y}$, the function 
        \[
        \P(\id_X\times \Delta_Y)\colon \P(X\times Y\times Y)\to \P(X\times Y)
        \]
        has a left adjoint, denoted $\eq{Y}{X}$ (which is not required to be a Boolean homomorphism).
        
         \item\label{i:h4}
        (Beck--Chevalley for equality) For any morphism $f\colon X'\to X$ in $\C$ and every $Y \in \C$, the following square in $\Pos$ commutes. 
        \[
            \begin{tikzcd}
            {X} & {\P(X\times Y)} & {\P(X\times Y\times Y)} \\
            X' & {\P(X'\times Y)} & {\P(X'\times Y\times Y)}
            \arrow["{\P(f\times\id_{Y})}", from=1-2, to=2-2, swap]
            \arrow["{\P(f\times \id_{Y\times Y})}", from=1-3, to=2-3]
            \arrow["{\eq{Y}{X'}}"', from=2-2, to=2-3]
            \arrow["{\eq{Y}{X}}", from=1-2, to=1-3]
            \arrow["f", from=2-1, to=1-1]
            \end{tikzcd}
        \]
        
    \end{enumerate}
\end{definition}

The category $\C$ is called the \emph{base category of $\P$}.
For $X\in \C$, $\P(X)$ is called the \emph{fiber} over $X$.
For a morphism $f\colon X'\to X$, the function $\P(f) \colon\P(X)\to \P(X')$ is called the \emph{reindexing along $f$}.

\begin{example}[Syntactic hyperdoctrine]\label{ex:synt-doc}
    Let $\T$ be a theory in a one-sorted first-order language with equality, and denote by $\F$ the set of function symbols. 
    The \emph{syntactic hyperdoctrine of $\T$} is the Boolean hyperdoctrine
    \[
    \LT^{\T} \colon \Ctx_\F \op\longrightarrow \BA
    \]
    (where $\LT$ stands for ``Lindenbaum--Tarski algebra'')
    defined as follows. 
    \begin{itemize}
        
        \item An object of the base category $\Ctx_\F$ is a finite set of variables (also called a \emph{context}).
        
        \item A morphism $X\to Y$ in $\Ctx_\F$ is a function
        \[
        \sigma\colon Y\longrightarrow\mathrm{Term}_{\F}(X),
        \]
        where $\mathrm{Term}_{\F}(X)$ denotes the set of terms in context $X$.
        The identity on $X$ is the inclusion $X\hookrightarrow \mathrm{Term}_\F(X)$.
        The composition of morphisms is given by simultaneous substitutions: given $\sigma \colon X \to Y$ and $\tau \colon Y \to Z$, the composite $\tau\circ\sigma \colon X \to Z$ is the function $z \mapsto \tau(z)[\sigma(y)/y]_{y \in Y}$.
        
        \item In $\Ctx_\F$, the terminal object is the empty set $\varnothing$, and the product of two objects $X$ and $Y$ is the disjoint union $X\sqcup Y$.
        
        \item On objects, $\LT^{\T} \colon \Ctx_\F\op\to \BA$ maps a context $X$ to the poset reflection of the preordered set of formulas whose free variables belong to $X$, ordered by provable consequence $\vdash_\T$ in $\T$, according to which $\alpha$ is below $\beta$ if and only if the sequent $\alpha \Rightarrow_X \beta$ is provable from $\T$; here, the subscript $X$ in the sequent symbol $\Rightarrow$ means that the sequent is considered in the context $X$.\footnote{We refer for example to \cite[Appendix~A]{AbbadiniGuffanti} for the rules of the sequent calculus with contexts for classical first-order logic. A consequence of the slight difference between the calculus \emph{with} contexts and the usual calculus \emph{without} contexts is that the sequent $\Rightarrow_{\varnothing} \exists x \top$ is in general not provable in the former, in accordance with admitting the empty set as a possible model.}
        
        \item On morphisms, $\LT^{\T}$ maps $\sigma\colon X\to Y$ to the substitution $[\sigma(y)/y]_{y\in Y} \colon \LT^{\T}(Y) \to \LT^{\T}(X)$.
        
        \item Let $X$ and $Y$ be finite sets of variables,
        and let $\pr_X$ denote the projection morphism $X\sqcup Y\to X$. 
        The Boolean homomorphism $\LT^{\T}(\pr_X)\colon\LT^{\T}(X)\to\LT^{\T}(X\sqcup Y)$, which maps a formula to itself but with the variables in $Y$ considered as dummy variables,
        has as left adjoint the function
        \[
        \exists y_1\dots\exists y_m\colon\LT^{\T}(X\sqcup Y)\longrightarrow\LT^{\T}(X),
        \]
        where $y_1,\dots,y_m$ is any enumeration of the elements of $Y$.
        
        \item Let $X$ and $Y$ be finite sets of variables and let $Y' = \{y'_1, \dots, y'_m\}$ be a copy of $Y = \{y_1,\dots, y_m\}$.
        The Boolean homomorphism $\LT^\T(\id_X\times \Delta_Y)\colon\LT^\T(X \sqcup Y \sqcup Y') \to \LT^\T(X \sqcup Y)$, which maps $\alpha$ to $\alpha[y_i/y'_i]_{i=1}^m$, has as left adjoint the function
            \begin{align*}
                \LT^\T(X \sqcup Y) &\longrightarrow \LT^\T(X \sqcup Y \sqcup Y')\\
                \beta &\longmapsto
                        \beta \wedge (y_1 = y'_1) \wedge \cdots\wedge (y_m = y'_m).
            \end{align*}
    \end{itemize}
\end{example}

The two cases (existential and equality) in the definition of a Boolean hyperdoctrine are very similar, and they can be unified as follows.

\begin{definition}[Boolean hyperdoctrine, compact definition] \label{d:bool_hyp_alt}
    Given a category $\C$ with finite products, a \emph{Boolean hyperdoctrine over $\C$} is a functor $\P \colon \C\op \to  \BA$ with the following properties.
    \begin{enumerate}
        \item  \label{i:ha1} {(Adjoints)}
        For all $X, Y \in \C$ and $n \in \N$, letting $\Delta^n_Y \colon Y \to Y^n$ denote the unique morphism whose composite with each projection gives the identity on $Y$, the function
        \[
        \P(\id_X \times \Delta^n_Y) \colon \P(X \times Y^n) \longrightarrow \P(X \times Y)
        \]
        has a left adjoint, denoted $\exn{n}{Y}{X}$.

        \item (Beck--Chevalley) For every morphism $f\colon X'\to X$ in $\C$, every $Y \in \C$ and every $n \in \N$, the following square in $\Pos$ commutes. 
        \[
            \begin{tikzcd}
            {X} & {\P(X\times Y)} & {\P(X\times Y^n)} \\
            X' & {\P(X'\times Y)} & {\P(X'\times Y^n)}
            \arrow["{\P(f\times\id_{Y})}", from=1-2, to=2-2, swap]
            \arrow["{\P(f \times \id_{ Y^n})}", from=1-3, to=2-3]
            \arrow["{\exn{n}{Y}{X'}}"', from=2-2, to=2-3]
            \arrow["{\exn{n}{Y}{X}}", from=1-2, to=1-3]
            \arrow["f", from=2-1, to=1-1]
            \end{tikzcd}
        \]
    \end{enumerate}
\end{definition}

\begin{proposition}[Equivalence of the two definitions]
    \cref{d:bool_hyp} and \cref{d:bool_hyp_alt} are equivalent.
\end{proposition}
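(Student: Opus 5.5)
The two directions will be handled separately. The observation that drives both is that $\Delta^0_Y\colon Y\to Y^0=\tmn$ is the unique map, so $\id_X\times\Delta^0_Y$ is, up to the canonical iso $X\times\tmn\cong X$, the projection $\pr^{X\times Y}_X$. Likewise $\Delta^2_Y=\Delta_Y$. Hence $\exn{0}{Y}{X}$ corresponds to $\ex{Y}{X}$ and $\exn{2}{Y}{X}$ corresponds to $\eq{Y}{X}$. Under these identifications, the compact Beck--Chevalley squares for $n=0,2$ are exactly the squares in items \eqref{i:h2} and \eqref{i:h4}. For $n=0$ we must check that $f\times\id_{Y^0}$ corresponds to $f$ under the isos $X\times\tmn\cong X$; this is naturality of the unitor. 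So \cref{d:bool_hyp_alt} implies \cref{d:bool_hyp} immediately. Transporting left adjoints along the Boolean isomorphisms $\P(X\times\tmn)\cong\P(X)$ is harmless.

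For the converse, assume \cref{d:bool_hyp}. I construct $\exn{n}{Y}{X}$ by induction on $n$.
\begin{itemize}
\item For $n=0$ and $n=2$ the adjoint comes from the previous paragraph.
\item For $n=1$, $\Delta^1_Y$ is an iso, so $\P(\id_X\times\Delta^1_Y)$ is an isomorphism and its inverse is the left adjoint. Beck--Chevalley holds by functoriality.
\item For $n\ge 2$, I factor $\id_X\times\Delta^{n+1}_Y$ as
\[
X\times Y\xrightarrow{\id_X\times\Delta^n_Y}X\times Y^n\xrightarrow{\id_{X\times Y^{n-1}}\times\Delta_Y}X\times Y^{n-1}\times Y\times Y\cong X\times Y^{n+1}.
\]
The second map duplicates the last coordinate. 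Its reindexing has left adjoint $\eq{Y}{X\times Y^{n-1}}$. Composing left adjoints, $\exn{n+1}{Y}{X}=\eq{Y}{X\times Y^{n-1}}\circ\exn{n}{Y}{X}$, transported along the reassociation iso, is left adjoint to the composite reindexing.
\end{itemize}

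Beck--Chevalley for $n+1$ is then a pasting of two commuting squares. The first is the inductive square for $\exn{n}{Y}{}$ along $f$. The second is the equality Beck--Chevalley square \eqref{i:h4} applied with base morphism $f\times\id_{Y^{n-1}}\colon X'\times Y^{n-1}\to X\times Y^{n-1}$. This base map is legitimate because \eqref{i:h4} quantifies over all morphisms $f$. I also need that the vertical maps match, i.e.\ that $(f\times\id_{Y^{n-1}})\times\id_Y$ agrees with $f\times\id_{Y^n}$ up to the associativity isos, and that the reassociation isos commute with $f\times\id$. Both follow from naturality of the product structure isos.

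The main obstacle is bookkeeping rather than mathematics. I must check that all identifications, namely the unitors, the associators, and the choice of which coordinate is duplicated in $\Delta^{n+1}_Y$, are compatible with both reindexing and the adjoints. The key fact I will isolate as a small lemma is this: if $\P(g)\dashv$-adjoints exist and $\phi,\psi$ are isos with $g'=\psi\circ g\circ\phi$, then $\P(\phi)^{-1}\circ L\circ\P(\psi)^{-1}$ is left adjoint to $\P(g')$, and Beck--Chevalley squares transport along such isos. With that lemma in place, everything reduces to commutative diagrams in $\C$ built from product structure maps.
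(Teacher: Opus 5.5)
Your proposal is correct and follows essentially the paper's argument: base cases $n=0,1,2$, then factor the $n$-fold diagonal, compose left adjoints, and paste Beck--Chevalley squares. The only differences are that you split off the last duplication, using equality over the base $X\times Y^{n-1}$ and Beck--Chevalley along $f\times\id_{Y^{n-1}}$, whereas the paper splits off $\Delta^2_Y$ first and applies the inductive hypothesis over the base $X\times Y$ along $f\times\id_Y$; you also track unitors and associators explicitly where the paper leaves them implicit.

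One small slip: in your transport lemma, the left adjoint of $\P(\psi\circ g\circ\phi)=\P(\phi)\circ\P(g)\circ\P(\psi)$ is $\P(\psi)^{-1}\circ L\circ\P(\phi)^{-1}$. What you wrote, $\P(\phi)^{-1}\circ L\circ\P(\psi)^{-1}$, does not typecheck.
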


\begin{proof}
    The conditions in \cref{d:bool_hyp_alt} imply those in \cref{d:bool_hyp}, since the ones in \cref{d:bool_hyp} are the cases $n = 0$ and $n = 2$ of the ones in \cref{d:bool_hyp_alt}.
    
    Conversely, suppose that $\P$ is a Boolean hyperdoctrine according to \cref{d:bool_hyp}. We show by induction on $n\in\N$ that for all $X,Y$ in $\C$ the function $\P(\id_X \times \Delta^n_Y)$ has a left adjoint $\exn{n}{Y}{X}$ with the Beck--Chevalley condition.
    The cases $n=0,2$ hold by assumption. Moreover, the case $n=1$ is trivial since $\Delta^1_Y$ is $\id_Y$: indeed, $\P(\id_{X \times Y}) = \id_{\P(X \times Y)}$, and the identity function trivially has itself as a left adjoint, and the Beck--Chevalley square clearly commutes.
    
    Now let us consider $n>2$. The morphism $\id_X\times \Delta^n_Y \colon X\times Y \to X\times Y^n$ is the composite
    \[
    X\times Y\xrightarrow{\id_X\times \Delta^2_Y} X\times Y\times Y\xrightarrow{\id_{X\times Y}\times \Delta^{n-1}_Y}X\times Y\times Y^{n-1}.
    \]
    By the inductive hypothesis (treating $X \times Y$ as the new base object), the map $\P(\id_{X \times Y} \times \Delta^{n-1}_Y)$ has a left adjoint $\exn{n-1}{Y}{X\times Y}$ satisfying the Beck--Chevalley condition. By the case $n = 2$, also the map $\P(\id_X \times \Delta^2_Y)$ has a left adjoint $\exn{2}{Y}{X}$ satisfying the Beck--Chevalley condition.
    Since the composite of two left adjoints is the left adjoint of the composite, the left adjoint $\exn{n}{Y}{X}$ is given by $\exn{n-1}{Y}{X\times Y}\circ\exn{2}{Y}{X}$. Finally, the Beck--Chevalley condition holds by pasting two commuting Beck--Chevalley squares.
    \[
        \begin{tikzcd}[row sep=0.8cm, column sep=1.4cm,baseline=(\tikzcdmatrixname-3-1.base)] 
        	{\P(X\times Y)} & {\P(X\times Y\times Y)} & {\P(X\times Y\times Y^{n-1})} \\
        	\\
        	{\P(X'\times Y)} & {\P(X'\times Y\times Y)} & {\P(X'\times Y\times Y^{n-1})}
        	\arrow["{\exn{2}{Y}{X}}", from=1-1, to=1-2]
        	\arrow["{\P(f\times \id_Y)}"', from=1-1, to=3-1]
        	\arrow["{\exn{n-1}{Y}{X\times Y}}", from=1-2, to=1-3]
        	\arrow["\begin{array}{c} \P(f\times \id_{Y^2})\\=\P(f\times \id_Y\times \id_Y) \end{array}"{description}, from=1-2, to=3-2]
        	\arrow["\begin{array}{c} \P(f\times \id_{Y^n} )\\=\P(f\times \id_Y\times \id_{Y^{n-1}}) \end{array}"{description}, from=1-3, to=3-3]
        	\arrow["{\exn{2}{Y}{X'}}"', from=3-1, to=3-2]
        	\arrow["{\exn{n-1}{Y}{X'\times Y}}"', from=3-2, to=3-3]
        \end{tikzcd}
        \qedhere
    \]
\end{proof}

\begin{remark}[Existentiality $\Leftrightarrow$ universality] \label{r:ex-iff-un}
    In \cref{d:bool_hyp_alt}, we required the existence of the left adjoint $\exn{n}{Y}{X} \colon \P(X \times Y)\to\P(X \times Y^n)$ of $\P(\id_X \times \Delta^n_Y)$ (with the Beck--Chevalley condition) for all $X,Y\in\C$ and $n \in \N$.
    In this Boolean case, this condition is equivalent to the existence of the right adjoint $\fan{n}{Y}{X}$ (with the Beck--Chevalley condition), as the two quantifiers are interdefinable: $\forall = \lnot \exists \lnot$ and $\exists = \lnot \forall \lnot$.
\end{remark}

\subsection{Full Boolean hyperdoctrines}

\begin{notation}[Equality predicate]\label{n:delta}
    Given a Boolean hyperdoctrine $\P$ over $\C$ and $Y \in \C$, we set
    \[
    \delta_Y\coloneqq\eq{Y}{\tmn}\top_{\P(Y)}.
    \]
    We refer to \cref{t:defs_equality}\eqref{i_eq:3}
    for further details about the properties of the family $(\delta_Y)_{Y\in\C}$.
\end{notation}

\begin{remark}[Adjoint to arbitrary reindexing]\label{r:every-left-adj}
    It is well known that, for every Boolean hyperdoctrine $\P \colon \C\op \to \BA$, the reindexing along \emph{any} morphism $f\colon X\to Y$ in $\C$ has a left adjoint $\exists_f\colon \P(X)\to \P(Y)$, namely
        \begin{align*}
            \exists_f\colon \P(X)&\longrightarrow \P(Y)\\
            \alpha&\longmapsto\ex{X}{Y}(\P(\id_Y \times f)(\delta_Y) \land \P(\pr^{Y \times X}_X)(\alpha));
        \end{align*}
    see \cite[Rem.~2.13]{MaiettiRosolini2013a}, and see also \cite[Prop.~2.6]{MaiPaRo} for the precise form given here; cf.\ also \cite[Example~4.3.7]{Jacobs1999}.
\end{remark}

\begin{remark}[Standard Beck--Chevalley squares are pullbacks]\label{r:pullback}
    Given a category $\C$ with finite products, a morphism $f \colon X' \to X$ in $\C$, $Y \in \C$, and $n \in \N$, we have the following commutative diagram.
    \begin{equation} \label{eq:diagram}
        \begin{tikzcd}[column sep = large]
            	X'\times Y \arrow[r,"{\id_{X'}\times \Delta^n_Y}"]\arrow[d,"{f\times \id_{Y}}"'] & X' \times Y^n\arrow[d,"{f\times \id_{Y^n}}"]  \\
            	X \times Y\arrow[r,"{\id_{X}\times \Delta^n_Y}"'] & X \times Y^n 
        \end{tikzcd}   	
    \end{equation}
    The Beck--Chevalley condition for a Boolean hyperdoctrine $\P \colon \C\op \to \BA$ says that, applying $\P$ to the commutative square in \eqref{eq:diagram} and then taking the left adjoints to the reindexings of the horizontal maps, gives a commutative square.
    It is easily seen that the square \eqref{eq:diagram} is a pullback.
\end{remark}

This leads to the following definition.

\begin{definition}[Full Boolean hyperdoctrine]\label{def:full}
    Given a category $\C$ with finite products, a \emph{full Boolean hyperdoctrine over $\C$} is a functor $\P \colon \C\op \to  \BA$ with the following properties.
    \begin{enumerate}
        \item \label{i:full-1} {(Full existential)} 
        For every morphism $f\colon X\to Y$ in $\C$, the function
        \[
        \P(f) \colon \P(Y) \longrightarrow \P(X),
        \]
        has a left adjoint $\exists_f$.
        
        \item (Full Beck--Chevalley) For any pullback square in $\C$ (on the left), the square in $\Pos$ on the right commutes.
        \[\begin{tikzcd}
        	W & Y & {\P(X)} & {\P(Z)} \\
        	X & Z & {\P(W)} & {\P(Y)}
        	\arrow["v", from=1-1, to=1-2]
        	\arrow["u"', from=1-1, to=2-1]
        	\arrow["\lrcorner"{anchor=center, pos=0.125}, draw=none, from=1-1, to=2-2]
        	\arrow["g", from=1-2, to=2-2]
        	\arrow["{\exists_f}", from=1-3, to=1-4]
        	\arrow["{\P(u)}"', from=1-3, to=2-3]
        	\arrow["{\P(g)}", from=1-4, to=2-4]
        	\arrow["f"', from=2-1, to=2-2]
        	\arrow["{\exists_v}"', from=2-3, to=2-4]
        \end{tikzcd}\]
    \end{enumerate}
\end{definition}

The name \emph{full} appears in \cite[Def.~3.5]{MaiettiTrotta2023}.

\begin{remark}[The automatic Beck--Chevalley inequality]\label{r:leq}
    Let us observe that, for every functor $\P\colon \C\op\to\BA$ and every commutative square
    \begin{equation}\label{diag:leq}
        \begin{tikzcd}
        	W & Y\\
        	X & Z 
        	\arrow["v", from=1-1, to=1-2]
        	\arrow["u"', from=1-1, to=2-1]
        	\arrow["g", from=1-2, to=2-2]
        	\arrow["f"', from=2-1, to=2-2]
        \end{tikzcd}
    \end{equation}
        such that $\P(f)$ and $\P(v)$ have left adjoints ($\exists_f$ and $\exists_v$ respectively), the inequality
        \[
        \exists_v \circ \P(u) \leq \P(g)\circ \exists_f
        \]
        holds. Indeed, for every $\alpha\in\P(X)$, we have
        \[
        \alpha\leqexpl{unit of $\exists_f\dashv \P(f)$} \P(f)( \exists_f \alpha),
        \]
        and thus, by monotonicity of $\P(u)$, \begin{equation}\label{eq:remark_leq}
            \P(u)(\alpha) \leq \P(u)\big(\P(f)( \exists_f \alpha)\big),
        \end{equation}
        and thus
        \begin{align*}
            &\exists_v  \P(u)(\alpha) \leq \P(g)( \exists_f \alpha)\\
            &\iff   \P(u)(\alpha) \leq \P(v)\big(\P(g)( \exists_f \alpha)\big) &&\text{(since $\exists_v\dashv \P(v)$)}\\
            &\iff   \P(u)(\alpha) \leq \P(u)\big(\P(f)( \exists_f \alpha)\big) &&\text{(by functoriality of $\P$ and commutativity of \eqref{diag:leq})},
        \end{align*}
        which holds by \eqref{eq:remark_leq}, as desired.
\end{remark}

\begin{example}[The subset hyperdoctrine]\label{ex:subset-hyp}
    The \emph{subset hyperdoctrine} is the contravariant power set functor $\mathscr{P} \colon \Set\op \to \BA$, which maps a set $X$ to its power set $\mathscr{P}(X)$, and a function $f\colon X\to Y$ to the preimage function
        \[
        \mathscr{P}(f) \coloneqq f^{-1}[-] \colon \mathscr{P}(Y) \to \mathscr{P}(X).
        \]

    It is a full Boolean hyperdoctrine. Indeed, for every $f\colon X\to Y$, the direct image function $f[-]\colon \pws(X)\to\pws(Y)$ is the left adjoint to the preimage function $f^{-1}[-]=\pws(f)\colon\pws(Y)\to\pws(X)$. The full Beck--Chevalley condition holds because, for every pullback square in $\Set$ 
    \begin{equation}\label{diag:pws}
    \begin{tikzcd}
	W & Y \\
	X & Z
	\arrow["v", from=1-1, to=1-2]
	\arrow["u"', from=1-1, to=2-1]
	\arrow["\lrcorner"{anchor=center, pos=0.125}, draw=none, from=1-1, to=2-2]
	\arrow["g", from=1-2, to=2-2]
	\arrow["f"', from=2-1, to=2-2]
\end{tikzcd}\end{equation}
and every $A\in\pws(X)$,
\begin{equation}\label{eq:ex-bcc}
g^{-1}[f[A]]=v[u^{-1}[A]].
\end{equation}
(Indeed, the direction $(\supseteq)$ holds for every commutative square ---see \cref{r:leq}), and $(\subseteq)$ is seen to hold using the fact that \eqref{diag:pws} is a pullback.)
\end{example}

Let us finally spell out the relation between the two notions.
Clearly, every full Boolean hyperdoctrine is a Boolean hyperdoctrine (see \cref{r:pullback}).
The converse is more subtle. By \cref{r:every-left-adj}, a Boolean
hyperdoctrine already has left adjoints to all reindexing maps. Thus, fullness does not amount to the existence of further adjoints; it amounts to requiring the Beck--Chevalley condition not only for the standard pullbacks associated with projections and diagonals, but for every pullback square in the base category.

This additional requirement should not be expected to follow formally from the ordinary axioms.
Pitts observes that there is no reason for the Beck--Chevalley condition to hold for all pullback squares that happen to exist in the base category \cite[Rem.~4.6]{Pitts1999}.
Jacobs makes the same point in fibrational language, stressing that the Beck--Chevalley condition is an external condition involving pullbacks in the base category \cite[Example~4.3.7]{Jacobs1999}.
In the doctrinal setting, Maietti and Trotta exhibit elementary pure existential doctrines that are not full existential doctrines \cite[Rem.~6.4]{MaiettiTrotta2023}; these examples, however, belong to the setting of primary doctrines rather than to the Boolean setting considered here.

The next section shows that the distinction is genuine also for Boolean hyperdoctrines, by exhibiting a non-full Boolean hyperdoctrine.

\section{A non-full Boolean hyperdoctrine}\label{s:non-full}

\subsection{A recipe for building non-full Boolean hyperdoctrines}

The following two propositions provide a recipe for building non-full Boolean hyperdoctrines.

\begin{proposition}[``Hyperdoctrine $\circ$ product-preserving'' is a hyperdoctrine]
    \label{p:recipe}
    Let $\C,\D$ be categories with finite products, $F\colon\C\to\D$ a functor preserving finite products, and $\R\colon\D\op\to\BA$ a Boolean hyperdoctrine. Then, the composite
    \[
        \C\op \xrightarrow{F\op}\D\op \xrightarrow{\R}\BA
    \]
    is a Boolean hyperdoctrine, with the $n$-ary existential structure given, for $X,Y\in\C$ and $n\in\N$, by
    \[
    \exn{n}{Y}{X}\coloneqq \exn{n}{FY}{FX}.
    \]
\end{proposition}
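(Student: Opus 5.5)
We verify the two conditions of \cref{d:bool_hyp_alt} for $\P\coloneqq\R\circ F\op$. The key point is that $F$ preserves finite products, so the structural morphisms in $\C$ are sent to the corresponding structural morphisms in $\D$, up to the canonical isomorphisms. First, $\P$ is a functor $\C\op\to\BA$ because it is a composite of functors.

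For the adjoints, fix $X,Y\in\C$ and $n\in\N$. Since $F$ preserves finite products, we may identify $F(X\times Y^n)$ with $FX\times (FY)^n$ via the canonical comparison isomorphisms. Under this identification, $F(\id_X\times\Delta^n_Y)$ corresponds to $\id_{FX}\times\Delta^n_{FY}$. Indeed, composing with each product projection yields the same morphism, because $F$ preserves the projections and $F(\id_Y)=\id_{FY}$. Hence
\[
\P(\id_X\times\Delta^n_Y)=\R(\id_{FX}\times\Delta^n_{FY})
\]
up to the isomorphisms $\R$ applied to the comparison maps. Since $\R$ is a Boolean hyperdoctrine, the right-hand side has the left adjoint $\exn{n}{FY}{FX}$. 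Transported along the Boolean isomorphisms induced by the comparison maps, this gives a left adjoint to $\P(\id_X\times\Delta^n_Y)$. Choosing the product in $\D$ to be $F$ of the product in $\C$, the transport is literally the identity, so we may write $\exn{n}{Y}{X}=\exn{n}{FY}{FX}$ as in the statement. The case $n=0$ needs $F\tmn\cong\tmn$, which again follows from preservation of finite products.

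For Beck--Chevalley, take $f\colon X'\to X$ in $\C$. Under the same identifications, $F(f\times\id_Y)$ corresponds to $Ff\times\id_{FY}$ and $F(f\times\id_{Y^n})$ corresponds to $Ff\times\id_{(FY)^n}$. So the Beck--Chevalley square for $\P$ at $(f,Y,n)$ is exactly the image under these isomorphisms of the Beck--Chevalley square for $\R$ at $(Ff,FY,n)$. That square commutes by hypothesis.

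The main obstacle is bookkeeping rather than substance. One must check that the comparison isomorphisms $F(X\times Y^n)\cong FX\times(FY)^n$ are natural in $X$, so that the same isomorphisms fit both rows of the square coherently. This follows from the universal property: both candidate maps agree after composing with the projections.
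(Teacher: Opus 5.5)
Your proposal is correct and follows the same route as the paper. You identify $\P(\id_X\times\Delta^n_Y)$ with $\R(\id_{FX}\times\Delta^n_{FY})$ using product preservation, and you obtain Beck--Chevalley for $\P$ at $f$ from Beck--Chevalley for $\R$ at $Ff$. You are more explicit than the paper, which simply writes equalities, about the comparison isomorphisms and their naturality. One caveat: it is your transport-along-isomorphisms argument that actually does the work. A simultaneous choice of products in $\D$ making every comparison map an identity need not exist when $F$ is not injective on objects.
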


\begin{proof}
    Let $\P$ denote the composite.
    
    Let $X,Y\in\C$ and $n\in\N$. Since $F$ preserves finite products, we have
    $F(X\times Y^n)\cong FX\times (FY)^n$ and  $F(\id_X\times\Delta^n_Y)=\id_{FX}\times\Delta^n_{FY}$, and hence
    \[
    \P(\id_X\times\Delta^n_Y)=\R(\id_{FX}\times\Delta^n_{FY}),
    \]
    which, by assumption, has a left adjoint, namely $\exn{n}{FY}{FX}$.
    
    Let us prove the Beck--Chevalley condition: for every $f\colon X'\to X$, we have
    \begin{align*}
        \P(f\times \id_{Y^n})\circ \exn{n}{FY}{FX} & = \R(Ff\times\id_{(FY)^n})\circ \exn{n}{FY}{FX}&&\text{(since $F$ preserves finite products)}\\
        & = \exn{n}{FY}{FX'}\circ \R(Ff\times \id_{FY}) &&\text{(by BC for $\R$ wrt $Ff$)}\\
        & = \exn{n}{FY}{FX'}\circ \R(F(f\times \id_{Y})) &&\text{(since $F$ preserves finite products)}\\
        & = \exn{n}{FY}{FX'}\circ \P(f\times \id_Y),
    \end{align*}
    as desired. This proves that $\P$ is a Boolean hyperdoctrine.
\end{proof}

The idea is to use the proposition above to build examples showing that the full Beck--Chevalley condition is strictly stronger than the standard one. A natural candidate for a counterexample is the composite $\P = \mathscr{P} \circ F\op$, where $\mathscr{P}$ is the subset hyperdoctrine and $F \colon \C \to \Set$ is a functor that preserves finite products but not all pullbacks. Indeed, while the previous proposition guarantees that $\P$ is a Boolean hyperdoctrine, the failure of $F$ to preserve pullbacks suggests that $\P$ may fail the full Beck--Chevalley condition. 
The following proposition confirms this intuition: pullback-preservation of $F$ is in fact equivalent to the fullness of $\pws \circ F\op$.

\begin{proposition}[Fullness $\Leftrightarrow$ pullback preservation]\label{p:full-powerset-preserves-pullbacks}
    Let $\C$ be a category with finite products, and
    $F\colon \C\to\Set$ a functor preserving finite products.
    The composite
    \[
        \C\op \xrightarrow{F\op}\Set\op \xrightarrow{\pws}\BA
    \]
    (which is a Boolean hyperdoctrine by \cref{p:recipe}) is full if and only if $F$ preserves all
    pullbacks that exist in $\C$.
\end{proposition}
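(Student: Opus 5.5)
The plan is to reduce everything to the concrete subset hyperdoctrine. Write $\P=\pws\circ F\op$. For a morphism $f$ of $\C$, $\P(f)=(Ff)^{-1}[-]$, which has the direct image $(Ff)[-]$ as left adjoint. Left adjoints between posets are unique, so the $\exists_f$ of \cref{def:full} (for instance the one of \cref{r:every-left-adj}) must coincide with $(Ff)[-]$. Hence, for a pullback square in $\C$ with vertex $W$, legs $u\colon W\to X$, $v\colon W\to Y$ and cospan $f\colon X\to Z$, $g\colon Y\to Z$, full Beck--Chevalley amounts to
\[
(Fg)^{-1}\big[(Ff)[A]\big]=(Fv)\big[(Fu)^{-1}[A]\big]\qquad\text{for all }A\subseteq FX .
\]
By \cref{r:leq}, the inclusion $\supseteq$ is automatic.

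($\Leftarrow$) Assume $F$ preserves the pullbacks that exist in $\C$. Then the image of the square under $F$ is a pullback in $\Set$. The displayed equation is then exactly \eqref{eq:ex-bcc} from \cref{ex:subset-hyp}, applied to this image square, so $\P$ is full.

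($\Rightarrow$) Assume $\P$ is full and fix a pullback square as above. We must show that the comparison map $c=\ple{Fu,Fv}\colon FW\to FX\times_{FZ}FY$ is a bijection.

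For surjectivity, take $(x,y)$ with $Ff(x)=Fg(y)$ and apply full Beck--Chevalley with $A=\{x\}$. Since $y\in(Fg)^{-1}[(Ff)[\{x\}]]$, we get $y=Fv(w)$ for some $w$ with $Fu(w)=x$, so $c(w)=(x,y)$.

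Injectivity is the step I expect to be the main obstacle, since a single application to the given square only yields surjectivity. The trick is to apply fullness to a second pullback square. Because $W$ is a pullback, $m\coloneqq\ple{u,v}\colon W\to X\times Y$ is a monomorphism in $\C$. Therefore the square with all four vertices given by $W$ and $X\times Y$, namely top and left $\id_W$ and right and bottom $m$, is a pullback in $\C$. Since $F$ preserves finite products, $Fm$ is identified with $\ple{Fu,Fv}\colon FW\to FX\times FY$.

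Full Beck--Chevalley for this square, with $A=\{w_1\}$, gives
\[
(Fm)^{-1}\big[(Fm)[\{w_1\}]\big]=\{w_1\}.
\]
Now suppose $c(w_1)=c(w_2)$. Then $Fm(w_1)=Fm(w_2)$, so $w_2$ lies in the left-hand side, and hence $w_2=w_1$. Thus $c$ is a bijection, and $F$ preserves the pullback.
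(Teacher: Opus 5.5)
Your proposal is correct and follows the paper's proof essentially step by step: the same reduction to direct images and preimages, the same appeal to \eqref{eq:ex-bcc} for ($\Leftarrow$), surjectivity via $A=\{x\}$, and injectivity via full Beck--Chevalley applied to the pullback square expressing that $m=\ple{u,v}$ is a monomorphism. Your remark that left adjoints between posets are unique, so that $\exists_f$ must be the direct image, makes explicit a point the paper leaves tacit.
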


\begin{proof}
    Let $\P$ denote the composite.
    For every $f\colon X\to Y$ in $\C$, we have
    \[
        \P(f)=(Ff)^{-1}\colon \pws(FY)\to\pws(FX),
    \]
    and its left adjoint is the direct image 
    \begin{equation*}
    (Ff)[-]\colon \pws(FX)\to\pws(FY).
    \end{equation*}
    
    Moreover, the full Beck--Chevalley condition for a pullback square
    \[
    \begin{tikzcd}
        W \arrow[r,"v"] \arrow[d,"u"'] & Y \arrow[d,"g"]\\
        X \arrow[r,"f"'] & Z
        \arrow["\lrcorner"{anchor=center, pos=0.125}, draw=none, from=1-1, to=2-2]
    \end{tikzcd}
    \]
    in $\C$, states that, for every $A\in\pws( FX)$,
    \[
    \P(g)\big(\exists_fA\big)=\exists_v\P(u)(A),
    \]
    which amounts to
    \begin{equation}\label{full-bcc-pullback}
        (Fg)^{-1}\big[(Ff)[A]\big]
        =
        (Fv)\big[(Fu)^{-1}[A]\big].
    \end{equation}

    $(\Rightarrow)$ Suppose that $\P$ is full. Let
    \[
    \begin{tikzcd}
        W \arrow[r,"v"] \arrow[d,"u"'] & Y \arrow[d,"g"]\\
        X \arrow[r,"f"'] & Z
        \arrow["\lrcorner"{anchor=center, pos=0.125}, draw=none, from=1-1, to=2-2]
    \end{tikzcd}
    \]
    be a pullback in $\C$. We prove that its image under $F$
    \[
    \begin{tikzcd}
        FW \arrow[r,"Fv"] \arrow[d,"Fu"'] & FY \arrow[d,"Fg"]\\
        FX \arrow[r,"Ff"'] & FZ
    \end{tikzcd}
    \]
    is a pullback in $\Set$, by showing that the canonical map 
    \begin{align*}
        h\colon FW&\longrightarrow FX\times_{FZ}FY\\
        w&\longmapsto ((Fu)(w),(Fv)(w)),
    \end{align*}
    is a bijection.
    
   Let $(x,y)\in FX\times_{FZ}FY$, i.e.\ $x\in FX$, $y\in FY$ such that $(Ff)(x)=(Fg)(y)$. Taking $A=\{x\}$ in \eqref{full-bcc-pullback}, we get $y\in (Fv)[(Fu)^{-1}[\{x\}]]$.
   Hence there is $w\in FW$ such that $(Fu)(w)=x$ and $(Fv)(w)=y$. Thus $h$ is surjective.

    It remains to prove that $h$ is injective. Since the original square is a pullback, the morphism
    \[
        m\coloneqq\langle u,v\rangle\colon W\to X\times Y
    \]
    is a monomorphism. Therefore the square
    \begin{equation}\label{diag:m-inj}
    \begin{tikzcd}
        W \arrow[r,"\id_W"] \arrow[d,"\id_W"'] & W \arrow[d,"m"]\\
        W \arrow[r,"m"'] & X\times Y
        \arrow["\lrcorner"{anchor=center, pos=0.125}, draw=none, from=1-1, to=2-2]
    \end{tikzcd}
    \end{equation}
    is a pullback. 
    
    We first show that $Fm$ is injective. Let $w,w'\in FW$ such that $(Fm)(w)=(Fm)(w')$. Applying full Beck--Chevalley to the square \eqref{diag:m-inj} yields, by \eqref{full-bcc-pullback} with $A=\{w\}$, 
    \[
        (Fm)^{-1}\big[(Fm)[\{w\}]\big]=\{w\};
    \]
    thus $w'=w$, and so $Fm$ is injective.
    
    In particular, since $F$ preserves finite products, the map $Fm$ is $\ple{ Fu,Fv}\colon FW\to FX\times FY$, and thus $\ple{ Fu,Fv}$ is injective.
    
    Since $h$ is a restriction (on the codomain) of $\ple{ Fu,Fv}$, $h$ is injective as well.
    Thus, $h$ is bijective, and hence the image square is a pullback in $\Set$, as desired.

    $(\Leftarrow)$ Suppose that $F$ preserves all pullbacks that exist in $\C$, and let us show that the full Beck--Chevalley condition is satisfied. 

    Let
    \[
    \begin{tikzcd}
        W \arrow[r,"v"] \arrow[d,"u"'] & Y \arrow[d,"g"]\\
        X \arrow[r,"f"'] & Z
        \arrow["\lrcorner"{anchor=center, pos=0.125}, draw=none, from=1-1, to=2-2]
    \end{tikzcd}
    \]
    be a pullback in $\C$. Since $F$ preserves it, the square
    \[
    \begin{tikzcd}
        FW \arrow[r,"Fv"] \arrow[d,"Fu"'] & FY \arrow[d,"Fg"]\\
        FX \arrow[r,"Ff"'] & FZ
        \arrow["\lrcorner"{anchor=center, pos=0.125}, draw=none, from=1-1, to=2-2]
    \end{tikzcd}
    \]
    is a pullback in $\Set$. Therefore, as shown in \cref{ex:subset-hyp}\eqref{eq:ex-bcc}, for every $A\in\pws( FX)$,
    \[
        (Fg)^{-1}\big[(Ff)[A]\big]
        =
        (Fv)\big[(Fu)^{-1}[A]\big].
    \]
    The latter equation is exactly \eqref{full-bcc-pullback}, i.e.\ the full Beck--Chevalley condition.
\end{proof}

To get a non-full Boolean hyperdoctrine, we are now left to find a functor $F \colon \C\to \Set$, with $\C$ a category with finite products, that preserves finite products but not pullbacks: then the composite
\[
    \C\op \xrightarrow{F\op}\Set\op \xrightarrow{\pws}\BA
\]
is a non-full Boolean hyperdoctrine.
In the following subsection, we exhibit a simple example.

\subsection{A simple example of a non-full Boolean hyperdoctrine}

Usually, a functor
\[
    F \colon \C \to \Set
\]
that looks like a ``connected component functor'' has the desired property that it preserves finite products but not pullbacks.

While we may consider the usual connected component functor on topological spaces, we opt for a simpler one, which does not need the structure of a topological space, but only the structure of a set.

\begin{definition}[The non-emptiness functor]
    We let 
    \[
    N\colon\Set\to\Set
    \]
    denote the \emph{non-emptiness functor}, i.e., the functor defined on objects as
    \begin{align*}
        N\colon\Set&\longrightarrow\Set\\
        X&\longmapsto\begin{cases}\varnothing&\text{if $X=\varnothing$,}\\
        \{*\}&\text{if $X\neq\varnothing$},
        \end{cases}
    \end{align*}
    and which maps a morphism $f\colon X\to Y$ in $\Set$ to the unique function $N(X)\to N(Y)$.
\end{definition}

The functor $N\colon\Set\to\Set$ can be thought of as a ``connected component''-like functor, by thinking of a set as equipped with the indiscrete topology.

\begin{lemma} \label{l:nonemptiness-preserves-prod-not-pull}
    The non-emptiness functor $N \colon \Set \to \Set$ preserves finite products but not pullbacks.
\end{lemma}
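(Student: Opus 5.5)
We treat the two claims separately: products are a short case check, and for pullbacks one explicit counterexample suffices.

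\emph{Finite products.} For the empty product, the terminal object of $\Set$ is a singleton, which is non-empty, so $N(\tmn)=\{*\}$ is terminal.

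For binary products, fix sets $X,Y$ with projections $\pr_X,\pr_Y$. We show that $N(X\times Y)$, with $N(\pr_X)$ and $N(\pr_Y)$, is a product of $N(X)$ and $N(Y)$, using the fact that $X\times Y=\varnothing$ if and only if $X=\varnothing$ or $Y=\varnothing$.
\begin{itemize}
\item If both $X$ and $Y$ are non-empty, then $X\times Y$ is non-empty too. All three images are $\{*\}$, and $\{*\}$ with the unique maps is a product of $\{*\}$ and $\{*\}$.
\item If one of $X,Y$ is empty, then $X\times Y$ is empty. So $N(X\times Y)=\varnothing$, and the product $N(X)\times N(Y)$ is also empty, because one factor is $\varnothing$. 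Any two empty sets are uniquely isomorphic, and the projections out of $\varnothing$ are the unique ones.
\end{itemize}
Equivalently, in both cases the canonical comparison map $N(X\times Y)\to N(X)\times N(Y)$ is a bijection. Preservation of all finite products then follows by induction, or directly from the terminal and binary cases.

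\emph{Failure on pullbacks.} Take $X=Y=\{0,1\}$, $Z=\{0,1\}$, and let $f$ and $g$ be the constant maps with values $0$ and $1$ respectively. Their pullback in $\Set$ is
\[
W=\{(x,y)\mid f(x)=g(y)\}=\varnothing .
\]
Applying $N$ gives a square whose top-left corner is $\varnothing$ and whose other three corners are $\{*\}$. The pullback of $\{*\}\to\{*\}\leftarrow\{*\}$ is $\{*\}$, which is not isomorphic to $\varnothing$. Hence $N$ does not preserve this pullback.

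This step needs no more than exhibiting disjoint fibres, and any pair of non-empty sets with maps having disjoint images works. No step here is a real obstacle. The only care needed is with the empty cases in the product check, in particular making sure the canonical comparison map, not just some bijection, is an isomorphism; this holds automatically because every map out of $\varnothing$ is unique.
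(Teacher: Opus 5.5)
Your proof is correct and follows essentially the same route as the paper. The paper also treats finite-product preservation as easy, and refutes pullback preservation with a square whose apex is empty while its other three corners are non-empty, namely the inclusions $\{a\}\hookrightarrow\{a,b\}\hookleftarrow\{b\}$; your constant maps with disjoint images work the same way, and your explicit nullary and binary case check just fills in the detail the paper leaves to the reader.
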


\begin{proof}
    It is easily seen that the non-emptiness functor $N\colon\Set\to\Set$ preserves finite products.
    To show that $N$ does not preserve pullbacks, consider the following commutative square in $\Set$:
    \begin{equation}\label{diag:pb-not-pres}
        \begin{tikzcd}
            \varnothing \arrow[r,"e"] \arrow[d,"e'"'] &
            \{b\} \arrow[d,"i'"] \\
            \{a\} \arrow[r,"i"'] &
            \{a,b\}.
        \end{tikzcd}
    \end{equation}
    Here $e,e'$ are the unique maps, and $i,i'$ are the inclusions of the sets. Since $\{a\}\cap\{b\}=\varnothing$, this square is a
    pullback in $\Set$.
    Applying $N$ gives the following square, which is not a pullback in $\Set$.
    \[
        \begin{tikzcd}[baseline=(\tikzcdmatrixname-2-1.base)] 
            \varnothing \arrow[r] \arrow[d] &
            \{*\} \arrow[d] \\
            \{*\} \arrow[r] &
            \{*\}
        \end{tikzcd}\qedhere
    \]
\end{proof}

\begin{theorem}[Boolean hyperdoctrine $\not\Rightarrow$ full]\label{t:BC-pi0-counterexample}
    The \emph{indiscrete functor}
    \begin{align*}
        \I\colon\Set\op&\longrightarrow\BA\\
        X&\longmapsto\begin{cases}1&\text{if $X=\varnothing$,}\\
        2&\text{if $X\neq\varnothing$},
        \end{cases}
    \end{align*}
    which maps a morphism $f\colon X\to Y$ in $\Set$ to the unique Boolean homomorphism $\I(Y)\to \I(X)$, is a non-full Boolean hyperdoctrine.
\end{theorem}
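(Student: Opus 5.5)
The plan is to recognize $\I$ as the composite $\pws\circ N\op$ and then apply \cref{p:recipe}, \cref{p:full-powerset-preserves-pullbacks} and \cref{l:nonemptiness-preserves-prod-not-pull}.

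First, identify $\I$ with $\pws\circ N\op$, up to natural isomorphism. On objects, $\pws(N(\varnothing))=\pws(\varnothing)$ is the one-element Boolean algebra $1$. For $X\neq\varnothing$, $\pws(N(X))=\pws(\{*\})$ is the two-element Boolean algebra $2$. On morphisms there is nothing to check. Since $1$ is terminal in $\BA$ and $2$ is initial in $\BA$, any Boolean homomorphism $\I(Y)\to\I(X)$ is unique whenever it exists. Such a homomorphism exists exactly when there is a function $X\to Y$: if $Y=\varnothing$ then $X=\varnothing$, and then $1\to1$ exists; in every other case the domain is $2$ or the codomain is $1$. So $\I$ is a well-defined functor. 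Moreover, the isomorphisms $\pws(N X)\cong\I(X)$ are automatically natural by this uniqueness.

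Both the hyperdoctrine axioms and fullness are invariant under natural isomorphism of functors $\Set\op\to\BA$, because left adjoints and the commutativity of Beck--Chevalley squares transport along fiberwise isomorphisms. It therefore suffices to prove the statement for $\pws\circ N\op$.

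By \cref{l:nonemptiness-preserves-prod-not-pull}, $N$ preserves finite products. Hence $\pws\circ N\op$ is a Boolean hyperdoctrine by \cref{p:recipe}, taking $\R=\pws$, which is a Boolean hyperdoctrine by \cref{ex:subset-hyp}. By \cref{l:nonemptiness-preserves-prod-not-pull}, $N$ also fails to preserve the pullback \eqref{diag:pb-not-pres}. So \cref{p:full-powerset-preserves-pullbacks} shows that the composite is not full.

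As an optional sanity check, one can compute the failure directly on \eqref{diag:pb-not-pres}. Take $A=\top$ in $\I(\{a\})=2$. Then $\P(i')(\exists_i\top)=\top$ in $\I(\{b\})$. On the other hand, $\exists_e\P(e')(\top)$ factors through $\I(\varnothing)=1$, and it equals $\bot$, since the left adjoint $\exists_e$ sends the unique element of $1$ to the bottom element of $2$.

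The main point requiring care is the identification step: making sure that $\I$ is really a functor and that the isomorphism with $\pws\circ N\op$ is natural. The uniqueness of homomorphisms into $1$ and out of $2$ settles this.
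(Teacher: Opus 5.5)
Your proposal is correct and follows the paper's route: identify $\I$ with $\pws\circ N\op$ up to natural isomorphism, then combine \cref{l:nonemptiness-preserves-prod-not-pull} with \cref{p:recipe} and \cref{p:full-powerset-preserves-pullbacks}. Your extra checks are all correct and only make explicit what the paper leaves implicit: functoriality and naturality via uniqueness of homomorphisms out of $2$ and into $1$, invariance of the axioms under natural isomorphism, and the direct computation $\top\neq\bot$ on \eqref{diag:pb-not-pres}.
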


\begin{proof}
    Let us observe that $\I \colon \Set\op \to \BA$ is naturally isomorphic to the composite of the following two functors
    \[
        \Set\op \xrightarrow{N\op} \Set\op\xrightarrow{\pws}\BA.
    \]
    
    By \cref{l:nonemptiness-preserves-prod-not-pull}, the non-emptiness functor $N \colon \Set \to \Set$ preserves finite products but not pullbacks.
    Therefore, $\I$ is a Boolean hyperdoctrine by \cref{p:recipe}, and is not full by \cref{p:full-powerset-preserves-pullbacks}.
\end{proof}

The name \emph{indiscrete functor} should suggest that $\I$ is naturally isomorphic to the functor that maps $X$ to the Boolean algebra $\{\varnothing, X\}$ (which has two elements if $X \neq \varnothing$ and one otherwise), which can be seen as the set of opens of $X$ when equipped with the indiscrete topology.

Thus, ordinary Beck--Chevalley does not imply full Beck--Chevalley for Boolean hyperdoctrines in general. The next section shows that the situation changes for the base $\FinSet\op$.

\section{Every Boolean hyperdoctrine over \texorpdfstring{$\FinSet\op$}{FinSet-op} is full}\label{s:bool-doct-finset}
In this section, we show that every Boolean hyperdoctrine over $\FinSet\op$ is full.

The key is that every morphism in $\FinSet$ decomposes into a surjective function followed by an injective function, and that every surjective function decomposes into a finite sequence of maps each of which collapses at most a pair of points.
This will allow us to decompose an arbitrary pushout square in $\FinSet$ into a pasting of pushout squares of a simple form, for which we know the Beck--Chevalley condition to hold.

\subsection{Shapes of pushouts in \texorpdfstring{$\FinSet$}{FinSet}}

\begin{definition}[Pair-identifying map]
    A \emph{pair-identifying map} is a function $X \to Y$ for which there is $y_0 \in Y$ such that the preimage of $\{y_0\}$ has cardinality $2$, and, for all $y \in Y \setminus \{y_0\}$, the preimage of $\{y\}$ has cardinality $1$.
    \[
    \begin{tikzpicture}[
        dot/.style={circle, fill, inner sep=1.5pt},
        every node/.style={font=\small},
        >=stealth
        ]
        \draw (0,0.8) ellipse (0.9 and 1.75);
        \draw (4,0.6) ellipse (0.9 and 1.45);
    
        \node at (0,2.8) {$X$};
        \node at (4,2.25) {$Y$};
    
        \node[dot] (x1) at (0,2) {};
        \node[dot] (x2) at (0,1.2) {};
        \node[dot] (x3) at (0,0.4) {};
        \node[dot] (x4) at (0,-0.4) {};
    
        \node[dot,label=right:{$y_0$}] (y)  at (4,1.6) {};
        \node[dot]                  (y1) at (4,0.4) {};
        \node[dot]                  (y2) at (4,-0.4) {};
        
        \draw[dashed, rounded corners] (-0.32,1.05) rectangle (0.32,2.15);
    
        \draw[|->, shorten >=2pt, shorten <=2pt] (x1) -- (y);
        \draw[|->, shorten >=2pt, shorten <=2pt] (x2) -- (y);
        \draw[|->, shorten >=2pt, shorten <=2pt] (x3) -- (y1);
        \draw[|->, shorten >=2pt, shorten <=2pt] (x4) -- (y2);
    \end{tikzpicture}
    \]

\end{definition}

\begin{remark}\label{r:decomposition_surj}
    Every surjective function between finite sets can be decomposed into a finite sequence of pair-identifying maps and isomorphisms.
\end{remark}

\begin{remark}[Pushouts of pair-identifying maps]\label{r:pair-id}
    The pushout of a span of two pair-identifying maps
    \[
    \begin{tikzcd}
        \bullet \arrow[two heads]{r}{f} \arrow[two heads, swap]{d}{g} & \bullet\\
        \bullet
    \end{tikzcd}
    \]
    has three possible shapes, depending on how the two pairs of identified elements intersect, as we next illustrate.
    Let us denote by $\{a_1,a_2\}$ the pair identified by $f$ and by $\{b_1,b_2\}$ the pair identified by $g$.
    In each case below, we display an intuitive picture of the pushout on the left, and its formal description as a diagram in $\FinSet$ on the right.

    \begin{enumerate}
        \item\label{eq:pi-1} If $\{a_1,a_2\}\cap\{b_1,b_2\}=\varnothing$:  the pushout in $\FinSet$ has the following shape.

        \smallskip
        
        \noindent
        \begin{minipage}{0.5\linewidth}
            \centering
            \begin{tikzpicture}[scale=0.7]
                
                \tikzset{
                  bigoval/.style={thick},
                  smalloval/.style={draw=black!80, fill=black!4},
                  adot/.style={circle, fill=NavyBlue, inner sep=2pt},
                  bdot/.style={circle, fill=BrickRed, inner sep=2pt},
                  alabel/.style={font=\itshape, text=NavyBlue},
                  blabel/.style={font=\itshape, text=BrickRed}
                }
                
                \draw[bigoval] (0,0) ellipse (1.5cm and 1.5cm);
                \draw[smalloval] (-0.4,0.4) ellipse (0.6cm and 0.6cm);
                \node[font=\itshape] at (-0.4,0.4) {$Y$};
                
                \node[adot] at (0.6,0) {};
                \node[alabel] at (0.6,0.4) {$a_1$};
                \node[adot] at (1.1,0) {};
                \node[alabel] at (1.1,0.4) {$a_2$};
                
                \node[bdot] at (0,-0.6) {};
                \node[blabel] at (-0.4,-0.6) {$b_1$};
                \node[bdot] at (0,-1.1) {};
                \node[blabel] at (-0.4,-1.1) {$b_2$};
                
                \draw[bigoval] (5,0) ellipse (1.5cm and 1.5cm);
                \draw[smalloval] (4.6,0.4) ellipse (0.6cm and 0.6cm);
                \node[font=\itshape] at (4.6,0.4) {$Y$};
                
                \node[adot] at (5.85,0) {};
                \node[alabel] at (5.85,0.4) {$a$};
                
                \node[bdot] at (5,-0.6) {};
                \node[blabel] at (4.6,-0.6) {$b_1$};
                \node[bdot] at (5,-1.1) {};
                \node[blabel] at (4.6,-1.1) {$b_2$};
                
                \draw[bigoval] (0,-5) ellipse (1.5cm and 1.5cm);
                \draw[smalloval] (-0.4,-4.6) ellipse (0.6cm and 0.6cm);
                \node[font=\itshape] at (-0.4,-4.6) {$Y$};
                
                \node[adot] at (0.6,-5) {};
                \node[alabel] at (0.6,-4.6) {$a_1$};
                \node[adot] at (1.1,-5) {};
                \node[alabel] at (1.1,-4.6) {$a_2$};
                
                \node[bdot] at (0,-5.85) {};
                \node[blabel] at (-0.4,-5.85) {$b$};
                
                \draw[bigoval] (5,-5) ellipse (1.5cm and 1.5cm);
                \draw[smalloval] (4.6,-4.6) ellipse (0.6cm and 0.6cm);
                \node[font=\itshape] at (4.6,-4.6) {$Y$};
                
                \node[adot] at (5.85,-5) {};
                \node[alabel] at (5.85,-4.6) {$a$};
                
                \node[bdot] at (5,-5.85) {};
                \node[blabel] at (4.6,-5.85) {$b$};
                
                \draw[->>, draw=NavyBlue] (1.75,0) -- (3.25,0)
                    node[midway,above,text=NavyBlue] {$f$};
                
                \draw[->>, draw=BrickRed] (0,-1.75) -- (0,-3.25)
                    node[midway,left,text=BrickRed] {$g$};
                
                \draw[->>, draw=BrickRed] (5,-1.75) -- (5,-3.25);
                
                \draw[->>, draw=NavyBlue] (1.75,-5) -- (3.25,-5);
                
                \node at (3.3,-2.9) {$\ulcorner$};
                
            \end{tikzpicture}
    
        \end{minipage}%
        \begin{minipage}{0.5\linewidth}
            \centering
    
            \begin{tikzcd}[column sep = 6em, row sep = 3em]
             	{Y \sqcup 4} & {Y\sqcup 3} \\
             	{Y \sqcup 3} & {Y\sqcup 2}
             	\arrow["{\id_Y\sqcup \nabla \sqcup \id\sqcup \id}", from=1-1, to=1-2]
             	\arrow["{\id_Y\sqcup \id\sqcup\id\sqcup \nabla}"', from=1-1, to=2-1]
             	\arrow["\ulcorner"{anchor=center, pos=0.875}, draw=none, from=1-1, to=2-2]
             	\arrow["{\id_Y\sqcup \nabla \sqcup \id}"', from=2-1, to=2-2]
             	\arrow["{\id_Y\sqcup \id\sqcup \nabla}", from=1-2, to=2-2]
            \end{tikzcd}
        \end{minipage}
        
        \smallskip
        
        \item\label{eq:pi-2} If $|\{a_1,a_2\}\cap\{b_1,b_2\}|=1$: assuming, without loss of generality, that the intersection consists of $a_2 = b_1$, the pushout in $\FinSet$ has the following shape.
    
        \smallskip
         
        \noindent
        \begin{minipage}{0.5\linewidth}
            \centering
            \begin{tikzpicture}[scale=0.7]
                
                \tikzset{
                  bigoval/.style={thick},
                  smalloval/.style={draw=black!80, fill=black!4},
                  adot/.style={circle, fill=NavyBlue, inner sep=2pt},
                  bdot/.style={circle, fill=BrickRed, inner sep=2pt},
                  kdot/.style={circle, fill=black, inner sep=2pt},
                  alabel/.style={font=\itshape, text=NavyBlue},
                  blabel/.style={font=\itshape, text=BrickRed},
                  abdot/.pic={
                    \begin{scope}
                      \clip (0,0) circle[radius=2.83pt];
                      \fill[NavyBlue] (-3pt,3pt) -- (3pt,3pt) -- (3pt,-3pt) -- cycle;
                      \fill[BrickRed] (-3pt,3pt) -- (-3pt,-3pt) -- (3pt,-3pt) -- cycle;
                    \end{scope}
                  }
                }
                
                \draw[bigoval] (0,0) ellipse (1.5cm and 1.5cm);
                \draw[smalloval] (-0.4,0.4) ellipse (0.6cm and 0.6cm);
                \node[font=\itshape] at (-0.4,0.4) {$Y$};
                
                \node[adot] at (1,0) {};
                \node[alabel] at (1,0.4) {$a_1$};
                
                \pic at (0.5,-0.5) {abdot};
                \node[font=\itshape, fill=white, text height=0.6ex, text depth=0.07ex] at (1.57,-0.82) 
                    {${\color{NavyBlue}a_2}={\color{BrickRed}b_1}$};
                
                \node[bdot] at (0,-1) {};
                \node[blabel] at (-0.4,-1) {$b_2$};
                
                \draw[bigoval] (5,0) ellipse (1.5cm and 1.5cm);
                \draw[smalloval] (4.6,0.4) ellipse (0.6cm and 0.6cm);
                \node[font=\itshape] at (4.6,0.4) {$Y$};
                
                \node[adot] at (5.75,-0.25) {};
                \node[alabel] at (5.95,0.05) {$a$};
                
                \node[bdot] at (5,-1) {};
                \node[blabel] at (4.6,-1) {$b_2$};
                
                \draw[bigoval] (0,-5) ellipse (1.5cm and 1.5cm);
                \draw[smalloval] (-0.4,-4.6) ellipse (0.6cm and 0.6cm);
                \node[font=\itshape] at (-0.4,-4.6) {$Y$};
                
                \node[adot] at (1,-5) {};
                \node[alabel] at (1,-4.6) {$a_1$};
                
                \node[bdot] at (0.25,-5.75) {};
                \node[blabel] at (-0.05,-5.95) {$b$};
                
                \draw[bigoval] (5,-5) ellipse (1.5cm and 1.5cm);
                \draw[smalloval] (4.6,-4.6) ellipse (0.6cm and 0.6cm);
                \node[font=\itshape] at (4.6,-4.6) {$Y$};
                
                \node[kdot] at (5.5,-5.5) {};
                \node[font=\itshape, fill=white, text height=0.9ex, text depth=0.06ex] at (6.4,-5.8) {$a=b$};
                
                \draw[->>, draw=NavyBlue] (1.75,0) -- (3.25,0)
                    node[midway,above,text=NavyBlue] {$f$};
                
                \draw[->>, draw=BrickRed] (0,-1.75) -- (0,-3.25)
                    node[midway,left,text=BrickRed] {$g$};
                
                \draw[->>] (5,-1.75) -- (5,-3.25);
                
                \draw[->>] (1.75,-5) -- (3.25,-5);
                
                \node at (3.3,-2.9) {$\ulcorner$};
                
            \end{tikzpicture}
        \end{minipage}%
        \begin{minipage}{0.5\linewidth}
            \centering
            \begin{tikzcd}[column sep = 6em, row sep = 3em]
            	{Y \sqcup 3} & {Y\sqcup 2} \\
            	{Y \sqcup 2} & {Y\sqcup 1}
            	\arrow["{\id_Y\sqcup  \nabla\sqcup\id}", from=1-1, to=1-2]
            	\arrow["{\id_Y\sqcup \id\sqcup \nabla}"', from=1-1, to=2-1]
            	\arrow["\ulcorner"{anchor=center, pos=0.875}, draw=none, from=1-1, to=2-2]
            	\arrow["{\id_Y\sqcup \nabla}", from=1-2, to=2-2]
            	\arrow["{\id_Y\sqcup \nabla}"', from=2-1, to=2-2]
            \end{tikzcd}
        \end{minipage}
        
        \smallskip
        
        \item\label{eq:pi-3} 
        If $\{a_1,a_2\}=\{b_1,b_2\}$: the pushout in $\FinSet$ has the following shape.
        
        \smallskip
        
        \noindent
        \begin{minipage}{0.5\linewidth}
            \centering
            \begin{tikzpicture}[scale=0.7]
                
                \tikzset{
                  bigoval/.style={thick},
                  smalloval/.style={draw=black!80, fill=black!4},
                  adot/.style={circle, fill=NavyBlue, inner sep=2pt},
                  bdot/.style={circle, fill=BrickRed, inner sep=2pt},
                  kdot/.style={circle, fill=black, inner sep=2pt},
                  alabel/.style={font=\itshape, text=NavyBlue},
                  blabel/.style={font=\itshape, text=BrickRed},
                  klabel/.style={font=\itshape},
                  abdot/.pic={
                    \begin{scope}
                      \clip (0,0) circle[radius=2.83pt];
                      \fill[NavyBlue] (-3pt,3pt) -- (3pt,3pt) -- (3pt,-3pt) -- cycle;
                      \fill[BrickRed] (-3pt,3pt) -- (-3pt,-3pt) -- (3pt,-3pt) -- cycle;
                    \end{scope}
                  }
                }
                
                \draw[bigoval] (0,0) ellipse (1.5cm and 1.5cm);
                \draw[smalloval] (-0.4,0.4) ellipse (0.6cm and 0.6cm);
                \node[font=\itshape] at (-0.4,0.4) {$Y$};
                
                \pic at (1,0) {abdot};
                \node[font=\itshape, fill=white, text height=0.9ex, text depth=0.06ex] at (1.16,0.45)
                    {${\color{NavyBlue}a_1}={\color{BrickRed}b_1}$};
                
                \pic at (0,-1) {abdot};
                \node[font=\itshape] at (-0.39,-0.64)
                    {${\color{NavyBlue}a_2}={\color{BrickRed}b_2}$};
                
                \draw[bigoval] (5,0) ellipse (1.5cm and 1.5cm);
                \draw[smalloval] (4.6,0.4) ellipse (0.6cm and 0.6cm);
                \node[font=\itshape] at (4.6,0.4) {$Y$};
                
                \node[adot] at (5.5,-0.5) {};
                \node[alabel] at (5.82,-0.82) {$a$};
                
                \draw[bigoval] (0,-5) ellipse (1.5cm and 1.5cm);
                \draw[smalloval] (-0.4,-4.6) ellipse (0.6cm and 0.6cm);
                \node[font=\itshape] at (-0.4,-4.6) {$Y$};
                
                \node[bdot] at (0.5,-5.5) {};
                \node[blabel] at (0.82,-5.82) {$b$};
                
                \draw[bigoval] (5,-5) ellipse (1.5cm and 1.5cm);
                \draw[smalloval] (4.6,-4.6) ellipse (0.6cm and 0.6cm);
                \node[font=\itshape] at (4.6,-4.6) {$Y$};
                
                \node[kdot] at (5.5,-5.5) {};
                \node[font=\itshape, fill=white, text height=0.9ex, text depth=0.06ex] at (6.4,-5.8) {$a=b$};
                
                \draw[->>, draw=NavyBlue] (1.75,0) -- (3.25,0)
                    node[midway,above,text=NavyBlue] {$f$};
                
                \draw[->>, draw=BrickRed] (0,-1.75) -- (0,-3.25)
                    node[midway,left,text=BrickRed] {$g$};
                
                \draw[->>] (5,-1.75) -- (5,-3.25);
                
                \draw[->>] (1.75,-5) -- (3.25,-5);
                
                \node at (3.3,-2.9) {$\ulcorner$};
                
            \end{tikzpicture}
        \end{minipage}%
        \noindent
        \begin{minipage}{0.5\linewidth}
            \centering
            \begin{tikzcd}[column sep = 6em, row sep=3em]
            	{Y \sqcup 2} & {Y\sqcup 1} \\
            	{Y\sqcup 1} & {Y\sqcup 1}
            	\arrow["{\id_Y\sqcup \nabla}", from=1-1, to=1-2]
            	\arrow["{\id_Y\sqcup \nabla}"' , from=1-1, to=2-1]
            	\arrow["\ulcorner"{anchor=center, pos=0.875}, draw=none, from=1-1, to=2-2]
            	\arrow["{\id_Y\sqcup\id}", from=1-2, to=2-2]
            	\arrow["{\id_Y\sqcup\id}"', from=2-1, to=2-2]
            \end{tikzcd}
        \end{minipage}
        
        \smallskip
        
    \end{enumerate}
    Let us observe that the two morphisms in the cospans $\inlinecorner$ in these pushouts are either pair-identifying maps (in \eqref{eq:pi-1} and \eqref{eq:pi-2}) or isomorphisms (in \eqref{eq:pi-3}). 
\end{remark}

\begin{lemma}[Grid decomposition for pushouts]\label{l:surj}
    The pushout square in $\FinSet$ of a span of two surjective functions
    \[
    \begin{tikzcd}
        \bullet \arrow[two heads]{r}{f} \arrow[two heads, swap]{d}{g} & \bullet\\
        \bullet
    \end{tikzcd}
    \]
    can be decomposed into a grid of pushout squares where every involved morphism is a pair-identifying map or an isomorphism.
    \[
    \begin{tikzcd}[column sep = 1.3em, row sep=1.3em]
    	\bullet & \bullet & \bullet & \bullet & \bullet & \bullet \\
    	\bullet & \bullet & \bullet & \bullet & \bullet & \bullet \\
    	\bullet & \bullet & \bullet & \bullet & \bullet & \bullet \\
    	\bullet & \bullet & \bullet & \bullet & \bullet & \bullet \\
    	\bullet & \bullet & \bullet & \bullet & \bullet & \bullet
    	\arrow[two heads, from=1-1, to=1-2]
    	\arrow["f"{description}, curve={height=-24pt}, from=1-1, to=1-6]
    	\arrow[two heads, from=1-1, to=2-1]
    	\arrow["g"{description}, curve={height=24pt}, from=1-1, to=5-1]
    	\arrow[two heads, from=1-2, to=1-3]
    	\arrow[two heads, from=1-2, to=2-2]
    	\arrow[two heads, from=1-3, to=1-4]
    	\arrow[two heads, from=1-3, to=2-3]
    	\arrow["\dots"{marking, allow upside down}, draw=none, from=1-4, to=1-5]
    	\arrow[two heads, from=1-4, to=2-4]
    	\arrow[two heads, from=1-5, to=1-6]
    	\arrow[two heads, from=1-5, to=2-5]
    	\arrow[two heads, from=1-6, to=2-6]
    	\arrow[two heads, from=2-1, to=2-2]
    	\arrow[two heads, from=2-1, to=3-1]
    	\arrow[two heads, from=2-2, to=2-3]
    	\arrow[two heads, from=2-2, to=3-2]
    	\arrow[two heads, from=2-3, to=2-4]
    	\arrow[two heads, from=2-3, to=3-3]
    	\arrow["\dots"{marking, allow upside down}, draw=none, from=2-4, to=2-5]
    	\arrow[two heads, from=2-4, to=3-4]
    	\arrow[two heads, from=2-5, to=2-6]
    	\arrow[two heads, from=2-5, to=3-5]
    	\arrow[two heads, from=2-6, to=3-6]
    	\arrow[two heads, from=3-1, to=3-2]
    	\arrow["\vdots"{description}, draw=none, from=3-1, to=4-1]
    	\arrow[two heads, from=3-2, to=3-3]
    	\arrow["\vdots"{description}, draw=none, from=3-2, to=4-2]
    	\arrow[two heads, from=3-3, to=3-4]
    	\arrow["\vdots"{description}, draw=none, from=3-3, to=4-3]
    	\arrow["\dots"{marking, allow upside down}, draw=none, from=3-4, to=3-5]
    	\arrow["\vdots"{description}, draw=none, from=3-4, to=4-4]
    	\arrow[two heads, from=3-5, to=3-6]
    	\arrow["\vdots"{description}, draw=none, from=3-5, to=4-5]
    	\arrow["\vdots"{description}, draw=none, from=3-6, to=4-6]
    	\arrow[two heads, from=4-1, to=4-2]
    	\arrow[two heads, from=4-1, to=5-1]
    	\arrow[two heads, from=4-2, to=4-3]
    	\arrow[two heads, from=4-2, to=5-2]
    	\arrow[two heads, from=4-3, to=4-4]
    	\arrow[two heads, from=4-3, to=5-3]
    	\arrow["\dots"{marking, allow upside down}, draw=none, from=4-4, to=4-5]
    	\arrow[two heads, from=4-4, to=5-4]
    	\arrow[two heads, from=4-5, to=4-6]
    	\arrow[two heads, from=4-5, to=5-5]
    	\arrow[two heads, from=4-6, to=5-6]
    	\arrow[two heads, from=5-1, to=5-2]
    	\arrow[two heads, from=5-2, to=5-3]
    	\arrow[two heads, from=5-3, to=5-4]
    	\arrow["\dots"{marking, allow upside down}, draw=none, from=5-4, to=5-5]
    	\arrow[two heads, from=5-5, to=5-6]
    \end{tikzcd}
    \]
\end{lemma}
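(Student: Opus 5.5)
We argue by double induction: first decompose $f$ and $g$ into chains of pair-identifying maps and isomorphisms using \cref{r:decomposition_surj}, say
\[
f = f_m\circ\cdots\circ f_1,\qquad g = g_n\circ\cdots\circ g_1 .
\]
We then fill in the grid one square at a time. Let the top-left corner be $A_{0,0}$, with top row $A_{0,j}$ given by the codomains of the $f_j$ and left column $A_{i,0}$ given by the codomains of the $g_i$. Proceeding in order of increasing $i+j$ (or row by row, left to right), we define $A_{i,j}$ as the pushout of the span
\[
A_{i,j-1}\longleftarrow A_{i-1,j-1}\longrightarrow A_{i-1,j}.
\]
This only makes sense if at each stage both legs of that span are pair-identifying maps or isomorphisms. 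The induction hypothesis we carry is therefore: every horizontal and vertical arrow constructed so far is a pair-identifying map or an isomorphism.

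The inductive step is a case analysis on the span.

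\begin{itemize}
\item If both legs are pair-identifying maps, \cref{r:pair-id} shows that the two arrows of the resulting cospan are again pair-identifying maps or isomorphisms.
\item If one leg is an isomorphism $\phi$, the pushout can be taken with the opposite side being an isomorphism and the parallel side being the other leg transported along $\phi$. Concretely, the pushout of $\phi$ and $h$ is given by $h\circ\phi^{-1}$ and $\id$, up to the choice of pushout object.
\item Composing a pair-identifying map with isomorphisms on either side yields a pair-identifying map, since the fibre cardinalities are preserved.
\end{itemize}

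In every case the new arrows are of the required kind. Pushouts in $\FinSet$ exist and are computed as in $\Set$, so every constructed square is a pushout.

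Finally, we identify the outer rectangle with the original pushout. By the pasting lemma for pushouts, a grid of pushout squares has an outer rectangle that is a pushout; we apply this first horizontally along each row and then vertically. The top edge composes to $f$ and the left edge to $g$, so the outer rectangle is a pushout of the span $(f,g)$. It therefore agrees with the given pushout square up to a unique isomorphism at the bottom-right corner. Absorbing that isomorphism into the last arrows of the bottom row and right column keeps them pair-identifying maps or isomorphisms, which gives the required decomposition.

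The main obstacle is the inductive step: one must check that pushouts of pair-identifying maps never produce arrows of some other shape. \cref{r:pair-id} settles this when both legs are pair-identifying; the cases involving isomorphisms are routine but need to be stated. A secondary point of care is that pushouts are only determined up to isomorphism, so the chosen representatives must be matched with the given square at the end, as done above.
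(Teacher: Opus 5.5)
Your proof is correct and follows the same route as the paper: you decompose $f$ and $g$ via \cref{r:decomposition_surj} and fill in the grid square by square from the top-left corner, using \cref{r:pair-id} when both legs are pair-identifying maps and the trivial pushout along an isomorphism otherwise. You additionally spell out the pasting-lemma argument and the matching of the outer rectangle with the given pushout up to a unique isomorphism, both of which the paper leaves implicit.
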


\begin{proof}
    By \cref{r:decomposition_surj}, both $f$ and $g$ can be decomposed into a finite sequence of pair-identifying maps and isomorphisms. By \cref{r:pair-id} in the pushout of pair-identifying maps we have isomorphisms or pair-identifying maps. Moreover, the pushout of an isomorphism along a pair-identifying map is trivially given by an isomorphism and a pair-identifying map, and in the pushout of two isomorphisms we have isomorphisms.
    Applying these observations recursively on the pushout squares from the upper-left corner to the lower-right corner, we get the result.
\end{proof}

\subsection{Some squares with the Beck--Chevalley condition}

In this subsection, for an arbitrary Boolean hyperdoctrine, we exhibit some squares with the Beck--Chevalley condition.
In particular, we will deal with the pullback squares in $\FinSet\op$ in \cref{r:pair-id}.
We will use these facts in \cref{ss:bool-hyp-full} to show that every Boolean hyperdoctrine over $\FinSet\op$ is full.

\begin{remark}\label{r:univ}
    Let $\P$ be a Boolean hyperdoctrine over $\C$, and let $f\colon X\to Y$ be a morphism in $\C$.
    We write $\forall_f\coloneqq\lnot\exists_f\lnot\colon \P(X)\to\P(Y)$. Clearly, $\forall_f$ is the right adjoint to $\P(f)$. Moreover, if $\P$ satisfies the Beck--Chevalley condition for the existential quantifier for a given pullback square, then $\P$ satisfies the Beck--Chevalley condition for the universal quantifier for the same pullback square.
\end{remark}

The following proposition shows that the Beck--Chevalley condition does not depend on the chosen orientation of the pullback square.

\begin{proposition}[Symmetry of the Beck--Chevalley Condition]\label{p:inverse-bc}
    Let $\P$ be a Boolean hyperdoctrine over $\C$.
    For every pullback square in $\C$
    \begin{equation}\label{diag:bcc-sym}
    \begin{tikzcd}
        W & Y \\
        X & Z,
        \arrow["v", from=1-1, to=1-2]
        \arrow["u"', from=1-1, to=2-1]
        \arrow["\lrcorner"{anchor=center, pos=0.125}, draw=none, from=1-1, to=2-2]
        \arrow["g", from=1-2, to=2-2]
        \arrow["f"', from=2-1, to=2-2]
    \end{tikzcd}
    \end{equation}
    the diagram below on the left commutes if and only if the one on the right does.
    \begin{equation}\label{diag:bcc-sym2}
    \begin{tikzcd}
    	{\P(W)} & {\P(Y)} && {\P(W)} & {\P(Y)} \\
    	{\P(X)} & {\P(Z)} && {\P(X)} & {\P(Z)}
    	\arrow["{\exists_v}", from=1-1, to=1-2]
    	\arrow["{\exists_u}"', from=1-4, to=2-4]
    	\arrow["{\P(v)}"', from=1-5, to=1-4]
    	\arrow["{\exists_g}", from=1-5, to=2-5]
    	\arrow["{\P(u)}", from=2-1, to=1-1]
    	\arrow["{\exists_f}"', from=2-1, to=2-2]
    	\arrow["{\P(g)}"', from=2-2, to=1-2]
    	\arrow["{\P(f)}", from=2-5, to=2-4]
    \end{tikzcd}
    \end{equation}
\end{proposition}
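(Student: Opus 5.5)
Both conditions will be reduced to a single statement about the pairing between $\P(X)$ and $\P(Y)$. The main tool is the following standard Boolean fact: for a left adjoint $\exists_f \dashv \P(f)$ where $\P(f)$ is a Boolean homomorphism, we have the Frobenius identity
\[
\exists_f(\alpha \wedge \P(f)(\beta)) = \exists_f(\alpha) \wedge \beta .
\]
The inequality $\le$ follows from adjunction. For $\ge$, we use complements: $\exists_f(\alpha)\wedge\beta \le \exists_f(\alpha \wedge \P(f)\beta)$ is equivalent to $\exists_f(\alpha) \le \exists_f(\alpha \wedge \P(f)\beta)\vee\lnot\beta$. By adjunction this becomes $\alpha \le \P(f)(\exists_f(\alpha\wedge\P(f)\beta)) \vee \lnot\P(f)\beta$. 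This holds because $\alpha\wedge\P(f)\beta$ lies below the first disjunct by the unit, while $\alpha\wedge\lnot\P(f)\beta$ lies below the second. I would record this as a preliminary step. It needs only that $\P(f)$ preserves $\wedge$, $\vee$ and $\lnot$.

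Next I would characterize the left diagram via disjointness tests. For $\alpha\in\P(X)$ and $\beta\in\P(Y)$, write $\exists_z$ for $\exists_{!}\colon\P(Z)\to\P(\tmn)$, which exists by \cref{r:every-left-adj}. Rather than introduce this map, it is simpler to use the equivalence
\[
\gamma=\gamma' \iff \big(\forall \beta:\ \gamma\wedge\beta=\bot \Leftrightarrow \gamma'\wedge\beta=\bot\big)
\]
in a Boolean algebra, which holds by taking $\beta=\lnot\gamma$ and $\beta=\lnot\gamma'$. By \cref{r:leq} we already have $\exists_v\P(u)\alpha\le\P(g)\exists_f\alpha$, so the left diagram commutes iff $\P(g)\exists_f\alpha\le\exists_v\P(u)\alpha$ for all $\alpha$. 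By complementation this is equivalent to
\[
\exists_v\P(u)\alpha\wedge\beta=\bot \implies \P(g)\exists_f\alpha\wedge\beta=\bot
\]
for all $\alpha\in\P(X)$ and $\beta\in\P(Y)$.

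Now I translate both sides. By Frobenius for $v$ and then adjunction, $\exists_v\P(u)\alpha\wedge\beta=\bot$ iff $\exists_v(\P(u)\alpha\wedge\P(v)\beta)=\bot$ iff $\P(u)\alpha\wedge\P(v)\beta=\bot$. For the other side, $\P(g)\exists_f\alpha\wedge\beta=\bot$ iff $\beta\le\P(g)\lnot\exists_f\alpha$ iff $\exists_g\beta\le\lnot\exists_f\alpha$ iff $\exists_f\alpha\wedge\exists_g\beta=\bot$. So the left square commutes iff for all $\alpha,\beta$,
\[
\P(u)\alpha\wedge\P(v)\beta=\bot \implies \exists_f\alpha\wedge\exists_g\beta=\bot.
\]
This condition is symmetric under swapping $(X,u,f,\alpha)$ with $(Y,v,g,\beta)$. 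The same computation with roles swapped shows that the right diagram is equivalent to the identical condition, so the two diagrams are equivalent.

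The step I expect to be the main obstacle is making the Frobenius identity and the complementation tricks fully rigorous, in particular checking that only Boolean-homomorphism properties of reindexing and the adjunctions are used. Notably, the pullback hypothesis is never needed; it appears only to match the setting.
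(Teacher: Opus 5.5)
Your proof is correct, but it takes a different route from the paper's.

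\textbf{The paper's route.} The paper proves one implication directly. Assuming the left square commutes, it conjugates with negation to get the universal Beck--Chevalley identity $\P(g)\circ\forall_f=\forall_v\circ\P(u)$ (\cref{r:univ}). An adjunction chase through $\P(f)\dashv\forall_f$ and $\exists_g\dashv\P(g)$ then reduces the missing inequality $\P(f)\exists_g\beta\le\exists_u\P(v)\beta$ to $\beta\le\forall_v\P(u)\exists_u\P(v)\beta$. This follows from the units of $\P(v)\dashv\forall_v$ and $\exists_u\dashv\P(u)$, and the converse is handled ``by the same argument''.

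\textbf{Your route.} You first prove Frobenius reciprocity. You then use the disjointness test ($\gamma\le\gamma'$ iff every $\beta$ disjoint from $\gamma'$ is disjoint from $\gamma$) to rewrite commutativity of the left square as the condition that $\P(u)\alpha\wedge\P(v)\beta=\bot$ implies $\exists_f\alpha\wedge\exists_g\beta=\bot$. This condition is visibly invariant under exchanging the two legs of the square. The individual steps check out: Frobenius via complements, $\exists_v\gamma=\bot\iff\gamma=\bot$ because $\P(v)$ preserves $\bot$, and $\P(g)\exists_f\alpha\wedge\beta=\bot\iff\exists_f\alpha\wedge\exists_g\beta=\bot$.

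\textbf{Comparison.} The paper's argument is shorter and needs no auxiliary lemma. Yours costs the Frobenius identity, but it gives a more informative, manifestly symmetric reformulation of the Beck--Chevalley condition for a commutative square. Roughly: whenever $\exists_f\alpha$ and $\exists_g\beta$ meet in $\P(Z)$, the predicates $\P(u)\alpha$ and $\P(v)\beta$ already meet in $\P(W)$. It also settles both implications at once rather than appealing to a repeated argument.

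Your observation that the pullback hypothesis is never used is right. It holds for the paper's proof as well, which only uses commutativity of the square through \cref{r:leq}. The aside about a map $\P(Z)\to\P(\tmn)$ is unused and can be dropped.
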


\begin{proof}
    Suppose that the square \eqref{diag:bcc-sym} satisfies the Beck--Chevalley condition, i.e.\ the diagram on the left in \eqref{diag:bcc-sym2}  commutes.
    We shall prove that for every $\alpha\in\P(Y)$, we have
    \[
    \P(f)(\exists_g\alpha)\leq\exists_{u}\P(v)(\alpha).
    \]
    (Indeed, the direction $(\geq)$ holds for every commutative square in $\C$---see \cref{r:leq}.)
    We have:
    \begin{align*}
         &\P(f)(\exists_g\alpha)\leq\exists_{u}\P(v)(\alpha)&&\text{in $\P(X)$}\\
         &\iff (\exists_g\alpha)\leq \forall_f\big(\exists_{u}\P(v)(\alpha)\big)&&\text{in $\P(Z)$}&&\text{(since $\P(f)\dashv\forall_f$, see  Rem.~\ref{r:univ})}\\
         &\iff \alpha\leq\P(g)\forall_f\big(\exists_{u}\P(v)(\alpha)\big)&&\text{in $\P(Y)$}&&\text{(since $\exists_g\dashv\P(g)$)}\\
         &\iff \alpha\leq\forall_v\P(u)\big(\exists_{u}\P(v)(\alpha)\big)&&\text{in $\P(Y)$}&&\text{(by BC for $\forall$, see  Rem.~\ref{r:univ})}
    \end{align*}
    By the units of the adjunctions $\P(v)\dashv\forall_v$ and  $\exists_{u}\dashv\P(u)$ respectively we have
    \[
    \alpha\leq \forall_v\P(v)(\alpha)\leq  \forall_v\P(u)\big(\exists_{u}\P(v)(\alpha)\big),
    \]
    as desired. 
    The converse then follows with the same argument.
\end{proof} 

In what follows, recall the notation $\delta_Y\coloneqq\eq{Y}{\tmn}\top_{\P(Y)}$ from \cref{n:delta}.

\begin{lemma}\label{l:delta-id-times-delta}
Let $\P$ be a Boolean hyperdoctrine over $\C$ and let $X,Y\in\C$. Then the pullback square
\[
    \begin{tikzcd}[column sep = 6.5em]
        Y\times X  \arrow{r}{\id_Y\times\Delta_X}\arrow["\id_Y\times\Delta_X"']{d} & Y\times X \times X  \arrow["\id_Y\times\Delta_X\times \id_X"]{d}\\
        Y\times X \times X \arrow["\id_Y\times\id_X\times \Delta_X"']{r}& Y\times X \times X \times X
        \arrow["\lrcorner"{anchor=center, pos=0.125}, draw=none, from=1-1, to=2-2]
    \end{tikzcd}
\]
satisfies the Beck--Chevalley condition.
\end{lemma}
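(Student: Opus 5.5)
The plan is to verify the Beck--Chevalley equation directly. I will compute both composites explicitly using a closed formula for the left adjoint of a reindexing along a map of the form $\id_A\times\Delta_X$. Write $Z=Y\times X\times X\times X$. Then $f=\id_{Y\times X}\times\Delta_X$ and $v=\id_Y\times\Delta_X$ are both instances of $\id_A\times\Delta_X$, with $A=Y\times X$ and $A=Y$ respectively. We must show $\P(g)\circ\exists_f=\exists_v\circ\P(u)$ as maps $\P(Y\times X\times X)\to\P(Y\times X\times X)$. By \cref{r:leq} the inequality $\geq$ is automatic, but the plan is to obtain equality outright.

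\emph{Step 1: a formula for $\eq{X}{A}$.} For $A,X\in\C$ and $\beta\in\P(A\times X)$, I claim
\[
\eq{X}{A}(\beta)=\P(\pr_{A\times X})(\beta)\wedge\P(\pr_{X\times X})(\delta_X),
\]
where $\pr_{A\times X}\colon A\times X\times X\to A\times X$ projects onto the first two factors and $\pr_{X\times X}$ onto the last two. Two ingredients give this.

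First, Frobenius reciprocity: $\exists_h(\P(h)\gamma\wedge\varepsilon)=\gamma\wedge\exists_h\varepsilon$. This holds for any left adjoint $\exists_h$ of a Boolean homomorphism $\P(h)$. For $\leq$, one transposes and uses $\varepsilon\leq\lnot\P(h)\gamma\vee\P(h)(\gamma\wedge\exists_h\varepsilon)$, via the unit and the preservation of $\lnot,\vee$ by $\P(h)$. The $\geq$ direction is by monotonicity and the counit.

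Second, $\P(\id_A\times\Delta_X)\P(\pr_{A\times X})=\id$, so $\beta=\P(\id_A\times\Delta_X)(\P(\pr_{A\times X})\beta)\wedge\top$. Frobenius then gives $\eq{X}{A}\beta=\P(\pr_{A\times X})\beta\wedge\eq{X}{A}\top$. Finally, the Beck--Chevalley condition for equality along the unique map $A\to\tmn$ yields $\eq{X}{A}\top=\P(\pr_{X\times X})(\delta_X)$. (This step could alternatively be quoted from \cref{t:defs_equality} in the appendix.)

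\emph{Step 2: compute both sides.} Read coordinates informally as $(y,x_1,x_2,x_3)$ on $Z$ and $(y,x,x')$ on $Y\times X\times X$. By Step 1, $\exists_f\alpha=\alpha(y,x_1,x_2)\wedge\delta(x_2,x_3)$. Reindexing along $g\colon(y,x,x')\mapsto(y,x,x,x')$ gives $\alpha(y,x,x)\wedge\delta(x,x')$; in precise terms this uses functoriality of $\P$ and the identities $\pr\circ g=\id_Y\times\Delta_X$ and $\pr_{X\times X}\circ g=\pr_{X\times X}$. On the other side, $\P(u)\alpha=\alpha(y,x,x)\in\P(Y\times X)$, and Step 1 with $A=Y$ gives $\exists_v\P(u)\alpha=\alpha(y,x,x)\wedge\delta(x,x')$. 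The two agree, so the square satisfies the Beck--Chevalley condition.

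The main obstacle is Step 1, specifically justifying Frobenius reciprocity and the identification $\eq{X}{A}\top=\P(\pr_{X\times X})\delta_X$. The rest is bookkeeping of projection composites. Care is needed to check that the composites with $g$ and with the projections are exactly the morphisms claimed, since the informal coordinate notation hides them.
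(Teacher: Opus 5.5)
Your proposal is correct and is essentially the paper's proof: the paper likewise expands $\exists_f\alpha$ and $\exists_v\P(u)\alpha$ with the formula of \cref{t:defs_equality} (\eqref{i_eq:1}$\Rightarrow$\eqref{i_eq:2}) and compares them after reindexing along $g$, using $\pr_{1,2,3}\circ g=(\id_Y\times\Delta_X)\circ\pr_{1,2}$ (your ``$\pr\circ g=\id_Y\times\Delta_X$'' is missing the $\pr_{1,2}$ and so has the wrong domain). There is one slip in your optional Frobenius derivation, where the two directions are swapped: $\leq$ follows from monotonicity and the counit, whereas $\geq$ is the half that needs transposition, the unit, and preservation of $\lnot,\vee$, the relevant inequality being $\varepsilon\leq\lnot\P(h)\gamma\vee\P(h)\exists_h(\P(h)\gamma\wedge\varepsilon)$.
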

\begin{proof}
We shall prove
    \[
        \P(\id_Y\times\Delta_X\times\id_X)\circ \exists_{\id_Y\times\id_X\times\Delta_X} = \exists_{\id_Y\times\Delta_X} \circ \P(\id_Y\times\Delta_X).
    \]
    Let $\alpha\in\P(Y\times X\times X)$; we have
    \begin{align*}
        &\P(\id_Y\times\Delta_X\times\id_X)( \exists_{\id_Y\times\id_X\times\Delta_X}\alpha)\\
        &=\P(\id_Y\times\Delta_X\times\id_X)\big( \P(\pr^{Y\times X\times X\times X}_{1,2,3})(\alpha)\land \P(\pr^{Y\times X\times X\times X}_{3,4})(\delta_X)\big)&&\text{(by \eqref{i_eq:1}$\Rightarrow$\eqref{i_eq:2} in Thm.~\ref{t:defs_equality})}\\
        &=\P(\pr_{1,2}^{Y\times X\times X}) \big(\P(\id_Y\times\Delta_X)(\alpha)\big)\land \P(\pr_{2,3}^{Y\times X\times X})(\delta_X)\\
        &=\exists_{\id_Y\times\Delta_X}  \P(\id_Y\times\Delta_X)(\alpha)&&\text{(by \eqref{i_eq:1}$\Rightarrow$\eqref{i_eq:2} in Thm.~\ref{t:defs_equality})}\\
    \end{align*}
    as desired.
\end{proof}

\begin{lemma}\label{l:id-delta}
    Let $\P$ be a Boolean hyperdoctrine over $\C$ and let $X,Y\in\C$. Then the pullback square
    \[
        \begin{tikzcd}
            Y \times X  \arrow{r}{\id_{Y\times X}}\arrow[swap]{d}{\id_{Y\times X}} &  Y \times X  \arrow{d}{\id_Y\times\Delta_X}\\
            Y \times X  \arrow[swap]{r}{\id_Y\times\Delta_X}& Y \times X \times X
            \arrow["\lrcorner"{anchor=center, pos=0.125}, draw=none, from=1-1, to=2-2]
        \end{tikzcd}
    \]
    satisfies the Beck--Chevalley condition.
\end{lemma}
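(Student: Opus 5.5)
The plan is to reduce the Beck--Chevalley condition for this degenerate square to one inequality, and then prove that inequality with the explicit description of $\exists_{\id_Y\times\Delta_X}$ already used in the proof of \cref{l:delta-id-times-delta}. Write $f=g=\id_Y\times\Delta_X$ and $u=v=\id_{Y\times X}$. Then $\exists_v\circ\P(u)$ is the identity of $\P(Y\times X)$, since the identity is left adjoint to itself. So the claim is that, for every $\alpha\in\P(Y\times X)$,
\[
\P(\id_Y\times\Delta_X)\big(\exists_{\id_Y\times\Delta_X}\alpha\big)=\alpha .
\]
The inequality $\geq$ needs no work: it is exactly the automatic Beck--Chevalley inequality of \cref{r:leq} applied to this commutative square. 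Equivalently, it is the unit of the adjunction $\exists_{\id_Y\times\Delta_X}\dashv\P(\id_Y\times\Delta_X)$. Only $\leq$ remains.

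For $\leq$, I will invoke the implication \eqref{i_eq:1}$\Rightarrow$\eqref{i_eq:2} of \cref{t:defs_equality}, as in the proof of \cref{l:delta-id-times-delta}. It gives
\[
\exists_{\id_Y\times\Delta_X}\alpha=\P(\pr^{Y\times X\times X}_{1,2})(\alpha)\land\P(\pr^{Y\times X\times X}_{2,3})(\delta_X).
\]
Next I reindex along $\id_Y\times\Delta_X$. Since reindexing is a Boolean homomorphism, it preserves $\land$. By functoriality, $\pr_{1,2}\circ(\id_Y\times\Delta_X)=\id_{Y\times X}$ and $\pr_{2,3}\circ(\id_Y\times\Delta_X)=\Delta_X\circ\pr^{Y\times X}_X$. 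Hence the left-hand side equals
\[
\alpha\land\P(\pr^{Y\times X}_X)\big(\P(\Delta_X)(\delta_X)\big),
\]
which is trivially $\leq\alpha$. This closes the argument.

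The main obstacle is checking that the appendix formula really applies to $\id_Y\times\Delta_X$ with $Y$ as the parameter object, in the coordinate ordering used here. This is precisely how it was used in \cref{l:delta-id-times-delta}, so I expect it to go through. If one also wants the equality to be visible from the computation itself, one needs $\P(\Delta_X)(\delta_X)=\top$. This follows from $\delta_X=\eq{X}{\tmn}\top$ and the unit of $\eq{X}{\tmn}\dashv\P(\Delta_X)$: indeed $\top\leq\P(\Delta_X)(\eq{X}{\tmn}\top)$. For the proof of the lemma, however, only the trivial bound $\alpha\land(\cdots)\leq\alpha$ is needed.
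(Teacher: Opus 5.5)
Your proof is correct and takes essentially the paper's route: the same reduction to $\P(\id_Y\times\Delta_X)\circ\exists_{\id_Y\times\Delta_X}=\id_{\P(Y\times X)}$, and the same computation of the left-hand side as $\alpha\land\P(\pr^{Y\times X}_X)\big(\P(\Delta_X)(\delta_X)\big)$ via \eqref{i_eq:1}$\Rightarrow$\eqref{i_eq:2} of \cref{t:defs_equality}. The only difference is the final step: the paper invokes reflexivity $\P(\Delta_X)(\delta_X)=\top$ to get equality at once, whereas you get $\geq$ from the unit (\cref{r:leq}) and $\leq$ from the trivial bound on a meet, so you do not need the reflexivity clause at all.
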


\begin{proof}
    We shall prove
        \[
            \P(\id_Y\times\Delta_X)\circ \exists_{\id_Y\times\Delta_X} = \exists_{\id_{Y\times X}} \circ \P(\id_{Y\times X}), 
        \]
    i.e., since the left adjoint to the identity on $\P(Y\times X)$ is the identity, and by functoriality of $\P$,    
        \[
            \P(\id_Y\times\Delta_X)\circ \exists_{\id_Y\times\Delta_X} = \id_{\P(Y\times X)}. 
        \]
    Let $\alpha\in\P(Y\times X)$; we have
        \begin{align*}
           &\P(\id_Y\times\Delta_X)( \exists_{\id_Y\times\Delta_X} \alpha)\\
           &=\P(\id_Y\times\Delta_X)\big( \P(\pr_{1,2}^{Y\times X\times X}) (\alpha)\land \P(\pr_{2,3}^{Y\times X\times X})(\delta_X)\big)&&\text{(by \eqref{i_eq:1}$\Rightarrow$\eqref{i_eq:2} in Thm.~\ref{t:defs_equality})}\\
            &=\alpha\land \P(\pr_{X}^{Y\times X})\big(\P(\Delta_X)(\delta_X)\big)\\
            &=\alpha&&\text{(by \eqref{i_eq:1}$\Rightarrow$\eqref{i:def=1} in Thm.~\ref{t:defs_equality})},
        \end{align*}
    as desired.
\end{proof}

\begin{remark}[Pasting Beck--Chevalley squares]\label{r:pasting}
    Pasting pullback squares satisfying the Beck--Chevalley condition yields a composite pullback square that also satisfies the Beck--Chevalley condition.
\end{remark}

\subsection{Every Boolean hyperdoctrine over \texorpdfstring{$\FinSet\op$}{FinSet-op} is full}\label{ss:bool-hyp-full}

\begin{theorem}[Fullness over $\FinSet\op$]\label{t:all-full-finset}
    Every Boolean hyperdoctrine $\P \colon \FinSet \to \BA$ over $\FinSet\op$ is full.
    That is, for every pushout square in $\FinSet$ (on the left), the square in $\Pos$ on the right commutes.
    \[\begin{tikzcd}
        	Z & X & {\P(X)} & {\P(Z)} \\
        	Y & W & {\P(W)} & {\P(Y)}
        	\arrow["f", from=1-1, to=1-2]
        	\arrow["g"', from=1-1, to=2-1]
        	\arrow["\ulcorner"{anchor=center, pos=0.875}, draw=none, from=1-1, to=2-2]
        	\arrow["u", from=1-2, to=2-2]
        	\arrow["{\exists_f}", from=1-3, to=1-4]
        	\arrow["{\P(u)}"', from=1-3, to=2-3]
        	\arrow["{\P(g)}", from=1-4, to=2-4]
        	\arrow["v"', from=2-1, to=2-2]
        	\arrow["{\exists_v}"', from=2-3, to=2-4]
        \end{tikzcd}\]
\end{theorem}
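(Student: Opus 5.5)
The plan is to decompose an arbitrary pushout square in $\FinSet$ into a pasting of pushout squares, each of which is, up to isomorphism, one of the squares already known to satisfy the Beck--Chevalley condition. Then \cref{r:pasting} gives the condition for the composite. Three preliminary facts will be used throughout.

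First, by \cref{r:every-left-adj}, every reindexing $\P(f)$ has a left adjoint $\exists_f$, so only the Beck--Chevalley equation needs proof.

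Second, the Beck--Chevalley condition is invariant under replacing a pushout square by an isomorphic one, because left adjoints of reindexings along isomorphisms are the inverse reindexings. Since pushouts in $\FinSet$ are unique up to isomorphism, it suffices to treat one chosen pushout of each span.

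Third, by \cref{p:inverse-bc}, it does not matter which pair of parallel sides of a square carries the $\exists$'s. So a square may be matched with a known Beck--Chevalley square in either orientation.

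Recall that the product in the base $\FinSet\op$ is the disjoint union $\sqcup$, and the terminal object is $\varnothing$.

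Given a span $f\colon Z\to X$, $g\colon Z\to Y$, I factor $f=i_1\circ s_1$ and $g=i_2\circ s_2$ with each $s_k$ surjective and each $i_k$ injective. I then build the pushout as a $2\times 2$ pasting.
\begin{enumerate}
\item The top-left square is the pushout of $s_1,s_2$. By \cref{l:surj}, it is a grid of pushout squares whose edges are pair-identifying maps or isomorphisms, and each such square is one of the three shapes in \cref{r:pair-id}, or trivial if an isomorphism is involved.
\item The top-right and bottom-left squares are pushouts of an injection along a surjection.
\item The bottom-right square is a pushout of two injections.
\end{enumerate}
Pushouts of injections are injections, so the outer edges of the two off-diagonal squares are still injections, as the construction requires.

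For squares involving an injection, I use that up to isomorphism an injection is a coproduct inclusion $Z\to Z\sqcup A$. Its pushout along any $h\colon Z\to Y'$ is $h\sqcup\id_A\colon Z\sqcup A\to Y'\sqcup A$ together with the inclusion $Y'\to Y'\sqcup A$. Read in $\FinSet\op$, this is exactly the square \eqref{eq:diagram} with $n=0$, the morphism $h$, and $Y=A$. So it satisfies the Beck--Chevalley condition by the existential axiom of \cref{d:bool_hyp}, possibly after applying \cref{p:inverse-bc}.

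For the surjective grid, note that $\nabla\colon 2\to 1$ in $\FinSet$ is the diagonal $\Delta_1$ of the one-element set in $\FinSet\op$. Hence:
\begin{itemize}
\item Shape \eqref{eq:pi-3} of \cref{r:pair-id} is the square of \cref{l:id-delta}, with $X=1$ and the remaining part $Y$.
\item Shape \eqref{eq:pi-2} is, after reordering summands, the square of \cref{l:delta-id-times-delta} with $X=1$.
\item Shape \eqref{eq:pi-1} is the equality Beck--Chevalley square \eqref{i:h4} of \cref{d:bool_hyp}. There, the base morphism $f$ is $\id\sqcup\nabla$ on the other pair, viewed in $\FinSet\op$, and the $Y$ of the axiom is $1$.
\item Squares with an isomorphism on one side and its pushed-out copy on the other satisfy Beck--Chevalley trivially.
\end{itemize}

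The main obstacle is the careful bookkeeping in these identifications. The squares of \cref{r:pair-id} and of the two lemmas must be matched up to canonical isomorphisms of coproducts, meaning reordering and reassociating summands, and with the correct orientation. For orientation, I will check that the horizontal direction of each $\FinSet$ pushout corresponds to the direction carrying $\exists$ in the lemma, or else invoke \cref{p:inverse-bc}. With all the small squares verified, pasting via \cref{r:pasting} yields the Beck--Chevalley condition for the whole square.
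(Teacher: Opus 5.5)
Your proposal is correct and follows essentially the same route as the paper. Both use the epi--mono factorization into a $2\times 2$ pasting, identify the injection squares with the $n=0$ existential Beck--Chevalley squares, and decompose the surjective block via \cref{l:surj} into the three shapes of \cref{r:pair-id}, matched to the equality axiom, \cref{l:delta-id-times-delta} and \cref{l:id-delta} (as you note, using $\nabla=\Delta_1$ in $\FinSet\op$) and then pasted via \cref{r:pasting}. The only minor difference is that you dispatch squares containing isomorphisms directly, whereas the paper reduces them to the injective case.
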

\begin{proof}
    Let us consider the epi--mono factorization of the morphisms $f$ and $g$.
    \[\begin{tikzcd}
	Z && X & Z && Y \\
	& \bullet &&& \bullet
	\arrow["f", from=1-1, to=1-3]
	\arrow["{e_f}"', two heads, from=1-1, to=2-2]
	\arrow["g", from=1-4, to=1-6]
	\arrow["{e_g}"', two heads, from=1-4, to=2-5]
	\arrow["{m_f}"', hook, from=2-2, to=1-3]
	\arrow["{m_g}"', hook, from=2-5, to=1-6]
    \end{tikzcd}\]
    The pushout square on the left-hand side in the statement is the outer square of the following pasting of pushout squares in $\FinSet$:
    \[
    \begin{tikzcd}[row sep = 3em, column sep = 3em]
    	Z & \bullet & X \\
    	\bullet & \bullet & \bullet \\
    	Y & \bullet & W
    	\arrow[two heads, from=1-1, to=1-2, "e_f"description]
    	\arrow["f"{description}, curve={height=-12pt}, from=1-1, to=1-3]
    	\arrow[two heads, from=1-1, to=2-1, "{e_g}"{name=0, description, anchor=center, inner sep=0}]
    	\arrow["\ulcorner"{anchor=center, pos=0.875}, draw=none, from=1-1, to=2-2]
    	\arrow["g"{description}, curve={height=12pt}, from=1-1, to=3-1]
    	\arrow[hook, from=1-2, to=1-3, "m_f"description]
    	\arrow[two heads, from=1-2, to=2-2, ""{name=1, anchor=center, inner sep=0}]
    	\arrow[two heads, from=1-3, to=2-3, ""{name=4}]
    	\arrow["u"{description}, curve={height=-12pt}, from=1-3, to=3-3]
    	\arrow[two heads, from=2-1, to=2-2]
    	\arrow[hook, from=2-1, to=3-1, "{m_g}"{name=2, anchor=center, inner sep=0, description}]
    	\arrow["\ulcorner"{anchor=center, pos=0.875}, draw=none, from=2-1, to=3-2]
    	\arrow[hook, from=2-2, to=3-2, ""{name=3, anchor=center, inner sep=0}]
    	\arrow["\ulcorner"{anchor=center, pos=0.875}, draw=none, from=2-2, to=3-3]
    	\arrow[hook, from=2-2, to=2-3]
    	\arrow[hook, from=2-3, to=3-3, ""{name=5}]
    	\arrow[two heads, from=3-1, to=3-2]
    	\arrow["v"{description}, curve={height=12pt}, from=3-1, to=3-3]
    	\arrow[hook, from=3-2, to=3-3]
    	\arrow["{(1)}"{description}, draw=none, from=0, to=1]
    	\arrow["{(3)}"{description}, draw=none, from=2, to=3]
    	\arrow["(2)"{description}, draw=none, from=1, to=4]
    	\arrow["(4)"{description}, draw=none, from=3, to=5]
    	\arrow["\ulcorner"{anchor=center, pos=0.875}, draw=none, from=1-2, to=2-3]
    \end{tikzcd}
    \]
    
    Indeed, in any category, the pushout of an epimorphism is an epimorphism, and moreover, in $\FinSet$, the pushout of an injection is an injection.
    
    By \cref{r:pasting}, it is enough to show that the four diagrams composing the pushout of $f$ along $g$ satisfy the Beck--Chevalley condition. Note that, by \cref{p:inverse-bc}, we may always choose whichever orientation of the pushout is most convenient. 
    
    For the squares $(2)$, $(3)$ and $(4)$, it is enough to show that any pushout of an injective function along any morphism satisfies the Beck--Chevalley condition. 

    Let $i \colon A \hookrightarrow B$ be an injective function in $\FinSet$; the injection $i$ is (up to isomorphism) a coproduct inclusion $i\colon A\hookrightarrow A\sqcup B'$. Moreover, the pushout of the inclusion $i$ along a function $k\colon A\to C$ is given by the following square.
    \[
    \begin{tikzcd}
    	A & {A\sqcup B'} \\
    	C & {C\sqcup B'}
    	\arrow["i", hook, from=1-1, to=1-2]
    	\arrow["k"', from=1-1, to=2-1]
    	\arrow["\ulcorner"{anchor=center, pos=0.875}, draw=none, from=1-1, to=2-2]
    	\arrow[from=1-2, to=2-2]
    	\arrow[hook, from=2-1, to=2-2]
    \end{tikzcd}
    \]
    According to property \eqref{i:h2} of \cref{d:bool_hyp} (Beck--Chevalley for existential), the pushout square above satisfies the Beck--Chevalley condition; indeed, it is a pullback square in $\FinSet\op$ of the form \cref{r:pullback}\eqref{eq:diagram} (with $n=0$).
    
    For the square $(1)$, let us decompose the pushout of $e_f$ along $e_g$ in a grid of pushout squares where every involved morphism is a pair-identifying map or an isomorphism as in \cref{l:surj}. By \cref{r:pasting}, it is enough to show that every pushout of a span of two pair-identifying maps satisfies the Beck--Chevalley condition. Indeed, the case where one of the involved maps is an isomorphism reduces to the case treated above (where one of the maps is injective).
    
    The pushout of a span of two pair-identifying maps is one of the three forms in \cref{r:pair-id}.

    The pushout in \cref{r:pair-id}\eqref{eq:pi-1} is a pullback square in $\FinSet\op$ of the form described in \cref{r:pullback}\eqref{eq:diagram} (with $n=2$), and thus by \cref{d:bool_hyp}\eqref{i:h4} (Beck--Chevalley for equality) it satisfies the Beck--Chevalley condition. To conclude, the pushout squares in \cref{r:pair-id}\eqref{eq:pi-2} and \eqref{eq:pi-3} are pullback squares in $\FinSet\op$ of the form described in \cref{l:delta-id-times-delta} and \cref{l:id-delta} respectively, and thus they satisfy the Beck--Chevalley condition.
\end{proof}

\begin{remark}[The syntactic hyperdoctrine of a theory in a relational language is full]\label{r:syn-full}
    Let $\T$ be a theory in a one-sorted relational language with equality, and let us consider its syntactic hyperdoctrine $\LT^\T \colon \Ctx_\varnothing\op  \to \BA$ (as in \cref{ex:synt-doc}). The category $\Ctx_\varnothing$ is equivalent to $\FinSet\op$. Indeed, since there are no function symbols, the only 
    terms in context $X$ are the variables themselves, and so 
    $\mathrm{Term}_\varnothing(X) = X$; thus a morphism $X \to Y$ in 
    $\Ctx_\varnothing$---i.e., a function $Y \to \mathrm{Term}_\varnothing(X) = X$---is 
    simply a function $Y \to X$, i.e., a morphism $X \to Y$ in $\FinSet\op$.
    Then, by \cref{t:all-full-finset}, the syntactic hyperdoctrine $\LT^\T \colon \FinSet \to \BA$ of $\T$ is full.
\end{remark}

\appendix

\section{Equivalent definitions of ``having equality''}
\label{appendix}

We collect here some alternative equivalent definitions of ``having equality'' that appear in the literature.

\begin{theorem}[Equivalent formulations of equality]\label{t:defs_equality}
    Let $\C$ be a category with finite products and let $\P \colon \C\op \to  \BA$ be a functor. The following are equivalent.
    \begin{enumerate}
    \item\label{i_eq:1}
    \begin{enumerate}

        \item\label{i_eq:1_a} (Equality) For all $X,Y\in\C$, letting $\Delta_Y$ denote the diagonal morphism $\ple{\id_Y,\id_Y}\colon Y\to Y\times Y$, the function 
        \[
        \P(\id_X\times \Delta_Y)\colon \P(X\times Y\times Y)\to \P(X\times Y)
        \]
        has a left adjoint, denoted $\eq{Y}{X}$.
        
         \item\label{i_eq:1_b}
        (Beck--Chevalley for equality) For any morphism $f\colon X'\to X$ in $\C$ and every $Y\in\C$, the following square in $\Pos$ commutes.
        \[
            \begin{tikzcd}
            {X} & {\P(X\times Y)} & {\P(X\times Y\times Y)} \\
            X' & {\P(X'\times Y)} & {\P(X'\times Y\times Y)}
            \arrow["{\P(f\times\id_{Y})}", from=1-2, to=2-2, swap]
            \arrow["{\P(f\times \id_{Y\times Y})}", from=1-3, to=2-3]
            \arrow["{\eq{Y}{X'}}"', from=2-2, to=2-3]
            \arrow["{\eq{Y}{X}}", from=1-2, to=1-3]
            \arrow["f", from=2-1, to=1-1]
            \end{tikzcd}
        \]
        
    \end{enumerate}
    \item\label{i_eq:2} There is a family \[
    \big(\delta_Y\in\P(Y\times Y)\big)_{Y\in\C}
    \]
    such that, for every $X,Y\in\C$, the assignment
    \begin{align*}
        \P(X\times Y)&\longrightarrow \P(X\times Y\times Y)\\
        \alpha&\longmapsto \P(\pr^{X\times Y\times Y}_{1,2})(\alpha)\land \P(\pr^{X\times Y\times Y}_{2,3})(\delta_Y)
    \end{align*}    
    defines a left adjoint to
    $\P(\id_X\times \Delta_Y)\colon \P(X\times Y\times Y)\to \P(X\times Y)$.
    
    \item\label{i_eq:3} There is a family\footnote{Informally, condition \eqref{i:def=1} in \cref{t:defs_equality} is the reflexivity of the equality relation, i.e.\ $\vdash y=y$. Condition \eqref{i:def=2} is the substitutivity property, i.e.\ $\alpha(y)\land (y=y')\vdash \alpha(y')$. Finally, condition \eqref{i:def=3} can be roughly interpreted as ``$(x=x')\land (y=y')\vdash (x,y)=(x',y')$'', meaning that two pairs coincide if both entries do.}
    \[
    \big(\delta_Y\in\P(Y\times Y)\big)_{Y\in\C}
    \]
    such that, for all $X,Y\in\C$,
    \begin{enumerate}
        \item\label{i:def=1} (Reflexivity)
        denoting by $\Delta_Y \colon Y \to Y \times Y$ the diagonal $\ple{\id_Y , \id_Y}$, 
        \[
        \top_{\P(Y)}\leq\P(\Delta_Y)(\delta_Y) \quad \text{in $\P(Y)$};
        \]
        
        \item\label{i:def=2} (Substitutivity)
        denoting by $\pr_1, \pr_2 \colon Y\times Y \to Y$ the two projections, for every $\alpha\in\P(Y)$,
        \[
        \P(\pr_1)(\alpha)\land \delta_Y\leq \P(\pr_2)(\alpha)\quad \text{in $\P(Y\times Y)$};
        \]
        
        \item\label{i:def=3} (Product equality) denoting by $\pr_1, \pr_2, \pr_3,\pr_4$ the four projections from $X\times Y\times X\times Y$ to the factors,
        \[
        \Bigl(\P(\ple{\pr_1,\pr_3})(\delta_X)\Bigr)\land \Bigl(\P(\ple{\pr_2,\pr_4})(\delta_Y)\Bigr)\leq \delta_{X\times Y}\quad \text{in $\P(X\times Y\times X\times Y)$}.
        \]
    \end{enumerate}
    \end{enumerate}
\end{theorem}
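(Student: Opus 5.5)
I will prove the cycle of implications \eqref{i_eq:1}$\Rightarrow$\eqref{i_eq:3}$\Rightarrow$\eqref{i_eq:2}$\Rightarrow$\eqref{i_eq:1}.

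\emph{From \eqref{i_eq:1} to \eqref{i_eq:3}.} Set $\delta_Y\coloneqq\eq{Y}{\tmn}\top_{\P(Y)}$, identifying $\tmn\times Y$ with $Y$. Reflexivity is the unit of the adjunction $\eq{Y}{\tmn}\dashv\P(\Delta_Y)$ evaluated at $\top$. For the other two properties I first derive a Frobenius-type formula
\[
\eq{Y}{X}\alpha=\P(\pr_{1,2})(\alpha)\land\P(\pr_{2,3})(\delta_Y).
\]
The inequality $\leq$ follows from the unit. For $\geq$, I use Beck--Chevalley along the projection $X\to\tmn$, which gives $\P(\pr_{2,3})(\delta_Y)=\eq{Y}{X}\top$. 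It then suffices to show
\[
\eq{Y}{X}\top\land\P(\pr_{1,2})\alpha\leq\eq{Y}{X}\alpha.
\]
In a Boolean algebra this amounts to $\eq{Y}{X}\top\leq\eq{Y}{X}\alpha\lor\P(\pr_{1,2})\lnot\alpha$. Transposing along the adjunction, and using that $\P(\id\times\Delta_Y)\circ\P(\pr_{1,2})=\id$, the right-hand side pulls back to something $\geq\alpha\lor\lnot\alpha=\top$. This is the standard Boolean trick replacing Frobenius reciprocity.

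With the formula in hand, substitutivity follows from the counit $\eq{Y}{\tmn}\P(\Delta_Y)\beta\leq\beta$ applied to $\beta=\P(\pr_2)\alpha$, combined with the formula for $X=Y$ and a reindexing. Product equality follows by transposing: the left-hand side is obtained from an iterated $\exists^2$ of $\top$. Concretely, I apply the formula twice, with base $X$ (resp.\ $Y$), to express the left-hand side as $\eq{}{}$ applied to $\top$ along the diagonal of $X\times Y$. By the definition of $\delta_{X\times Y}$ and the adjunction, it then suffices to check that the restriction along the diagonal is $\top$, which is reflexivity.

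\emph{From \eqref{i_eq:3} to \eqref{i_eq:2}.} Define $E(\alpha)=\P(\pr_{1,2})\alpha\land\P(\pr_{2,3})\delta_Y$. The unit $\alpha\leq\P(\id\times\Delta_Y)E(\alpha)$ follows from reflexivity. For the counit $E(\P(\id\times\Delta_Y)\beta)\leq\beta$, I need substitutivity in the generalized form
\[
\P(\pr_{1,2})\gamma\land\P(\pr_{2,3})\delta_Y\leq\P(\pr_{1,3})\gamma,
\]
with a parameter $X$. I obtain it by applying \eqref{i:def=2} to $X\times Y$ together with product equality: reflexivity for $\delta_X$ gives $\P(\ple{\pr_1,\pr_1})\delta_X=\top$, and product equality then bounds $\delta_Y$ (pulled back) by $\delta_{X\times Y}$ along the relevant map. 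Taking $\gamma=\beta$ reindexed suitably, the generalized substitutivity gives $E(\P(\id\times\Delta_Y)\beta)\leq\beta$.

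\emph{From \eqref{i_eq:2} to \eqref{i_eq:1}.} Existence of the left adjoint is given. Beck--Chevalley follows because $\P(f\times\id_{Y\times Y})$ is a Boolean homomorphism commuting with the projections, so it maps $E_X(\alpha)$ to $E_{X'}(\P(f\times\id_Y)\alpha)$ by naturality of the formula.

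\emph{Main obstacle.} The hardest step is the $\geq$ direction of the Frobenius formula in \eqref{i_eq:1}$\Rightarrow$\eqref{i_eq:3}, where Boolean complementation must substitute for an explicit Frobenius axiom. A close second is the bookkeeping of projections in the product-equality step.
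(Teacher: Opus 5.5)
Your proposal is correct, but it is organized differently from the paper. The paper proves only \eqref{i_eq:1}$\Leftrightarrow$\eqref{i_eq:2} and cites \cite[Prop.~2.5]{EmPaRo20} for \eqref{i_eq:2}$\Leftrightarrow$\eqref{i_eq:3}. Its proof of \eqref{i_eq:1}$\Rightarrow$\eqref{i_eq:2} is a single chain of equivalences. It moves $\P(\pr_{1,2})(\alpha)$ to the right as a Boolean implication, rewrites $\P(\pr_{2,3})(\delta_Y)$ as $\eq{Y}{X}\top$ by Beck--Chevalley along $X\to\tmn$, and then transposes.

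Your Frobenius formula has the same content, split into two inequalities. Your complement trick is the paper's implication step specialized to $\beta=\eq{Y}{X}\alpha$, and your $\leq$ half is reflexivity, transposed. Your \eqref{i_eq:2}$\Rightarrow$\eqref{i_eq:1} is the same as the paper's.

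What you add is a self-contained treatment of \eqref{i_eq:3}:
\begin{itemize}
\item reflexivity and substitutivity come from the unit and counit of $\eq{Y}{\tmn}\dashv\P(\Delta_Y)$;
\item product equality comes from factoring $\Delta_{X\times Y}$ into two equality-type maps and transposing down to reflexivity for $X\times Y$;
\item for the converse, a parametrized substitutivity is extracted from substitutivity for $X\times Y$, product equality, and reflexivity for $X$.
\end{itemize}
The paper's route is shorter. Yours removes the dependence on the external reference and shows why product equality is needed: it is what lets substitutivity carry parameters.

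When you write this out, fix three details.
\begin{itemize}
\item For substitutivity, use the formula at base $\tmn$, where it reads $\eq{Y}{\tmn}\alpha=\P(\pr_1)(\alpha)\land\delta_Y$, not at $X=Y$.
\item For product equality, factor $\Delta_{X\times Y}$ as $(x,y)\mapsto(x,y,x)\mapsto(x,y,x,y)$. After shuffle isomorphisms (the left adjoint of reindexing along an isomorphism is reindexing along its inverse), this uses the formula at bases $Y$ and $X\times X$. It gives exactly $\P(\ple{\pr_1,\pr_3})(\delta_X)\land\P(\ple{\pr_2,\pr_4})(\delta_Y)$ as $\exists_{\Delta_{X\times Y}}\top$.
\item For the counit in \eqref{i_eq:3}$\Rightarrow$\eqref{i_eq:2}, apply your parametrized substitutivity with parameter $X\times Y$ and $\gamma=\beta$. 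Then reindex along $\ple{\pr_1,\pr_2,\pr_2,\pr_3}\colon X\times Y\times Y\to X\times Y\times Y\times Y$. If you instead take parameter $X$ and $\gamma=\P(\id_X\times\Delta_Y)(\beta)$, the right-hand side becomes $\P(\ple{\pr_1,\pr_3,\pr_3})(\beta)$ rather than $\beta$.
\end{itemize}
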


\begin{proof}
    \eqref{i_eq:2} $\Leftrightarrow \eqref{i_eq:3}$. The proof can be found in \cite[Prop.~2.5]{EmPaRo20}.
    
    \eqref{i_eq:2} $\Rightarrow$ \eqref{i_eq:1}. Clearly, \eqref{i_eq:2} implies \eqref{i_eq:1_a}. Let us prove \eqref{i_eq:1_b}: for every $\alpha \in\P(X\times Y)$ we have
    \begin{align*}
    \P(f\times \id_{Y\times Y})(\eq{Y}{X}\alpha)&=\P(f\times \id_{Y\times Y})\big(\P(\pr^{X\times Y\times Y}_{1,2})(\alpha)\land \P(\pr^{X\times Y\times Y}_{2,3})(\delta_Y)\big)\\
    &=\P(\pr^{X'\times Y\times Y}_{1,2})\big(\P(f\times\id_{Y})(\alpha)\big)\land \P(\pr^{X'\times Y\times Y}_{2,3})(\delta_Y)\\
    &=\eq{Y}{X'}\P(f\times\id_{Y})(\alpha),
    \end{align*}
    as desired.
    
    \eqref{i_eq:1} $\Rightarrow$ \eqref{i_eq:2}. For every $Y\in\C$, set $\delta_Y\coloneqq\eq{Y}{\tmn}\top_{\P(Y)}\in\P(Y\times Y)$.
    For every $X,Y\in\C$, $\alpha\in\P(X\times Y)$ and $\beta\in\P(X\times Y\times Y)$, we have
    \begin{align*}
    &\P(\pr^{X\times Y\times Y}_{1,2})(\alpha)\land \P(\pr^{X\times Y\times Y}_{2,3})(\eq{Y}{\tmn}\top_{\P(Y)})\leq\beta \\
    &\iff \P(\pr_{2,3}^{X\times Y\times Y})(\eq{Y}{\tmn}\top_{\P(Y)})\leq  \P(\pr_{1,2}^{X\times Y\times Y})(\alpha)\to \beta \\
    &\iff \eq{Y}{X}\P(\pr_2^{X\times Y})(\top_{\P(Y)})\leq  \P(\pr_{1,2}^{X\times Y\times Y})(\alpha)\to \beta &&\text{(by BC wrt $!_X\colon X\to \tmn$)}\\
    &\iff \eq{Y}{X}\top_{\P(X\times Y)}\leq  \P(\pr_{1,2}^{X\times Y\times Y})(\alpha)\to \beta \\
    &\iff \top_{\P(X\times Y)}\leq \P(\id_X\times\Delta_Y)\big( \P(\pr_{1,2}^{X\times Y\times Y})(\alpha)\to \beta\big) &&\text{(since $\eq{Y}{X}\dashv\P(\id_X\times\Delta_Y)$)}\\
    & \iff \top_{\P(X\times Y)}\leq \alpha\to \P(\id_X\times \Delta_Y)(\beta)\\
    &\iff \alpha\leq\P(\id_X\times \Delta_Y)(\beta),
    \end{align*}
    as desired.
\end{proof}

\makeatletter
\begingroup
\let\addcontentsline\@gobblethree
\section*{Acknowledgments}
Francesca Guffanti was funded by the SHINE program of the French National Research Agency (ANR) under the project ``GULI'' (Grandeurs et Unités pour les Langages Informatiques), grant number ANR-22-EXES-0017.

\endgroup
\makeatother

\bibliographystyle{apalike}
\bibliography{Biblio}

\end{document}